\theoremstyle{plain}
\newtheorem{thm}{Theorem}[section]
\newtheorem{exmp}[thm]{Example}
\newtheorem{prop}[thm]{Proposition}
\newtheorem{lemm}[thm]{Lemma}
\theoremstyle{definition}
\newtheorem{defn}{Definition}
\theoremstyle{remark}
\newcommand{\pos}{\mathcal{P}}
\newcommand{\rub}{{\rm R}}
\newcommand{\rubg}{\Gamma \rub}
\newcommand{\rubgm}{\Gamma \rub^{-}}
\newcommand{\perm}{{\rm Sym}}
\newcommand{\alt}{{\rm Alt}}
\newcommand{\simp}{\triangle}
\newcommand{\hoso}{\oslash}
\newcommand{\dito}{\ominus}
\newcommand{\mathleftmoon}{\hoso_k}       
\title{Rubik's Abstract Polytopes}
\author{%
Giovanni Luca Marchetti \\
Department of Mathematics \\
Royal Institute of Technology (KTH) \\
Stockholm, Sweden
}
\begin{document}

\maketitle

\begin{abstract}
We generalize the Rubik's cube, together with its group of configurations, to any abstract regular polytope. After discussing general aspects, we study the Rubik's simplex of arbitrary dimension and provide a complete description of the associated group. We sketch an analogous argument for the Rubik's hypercube as well.   
\end{abstract}

\epigraph{\textit{Who in the world am I? Ah, that's the great puzzle.}}{--Lewis Carroll, Alice's Adventures in Wonderland.}

\vspace{.8cm}
\begin{figure}[th!]
\begin{center}
\includegraphics[width=.95\textwidth]{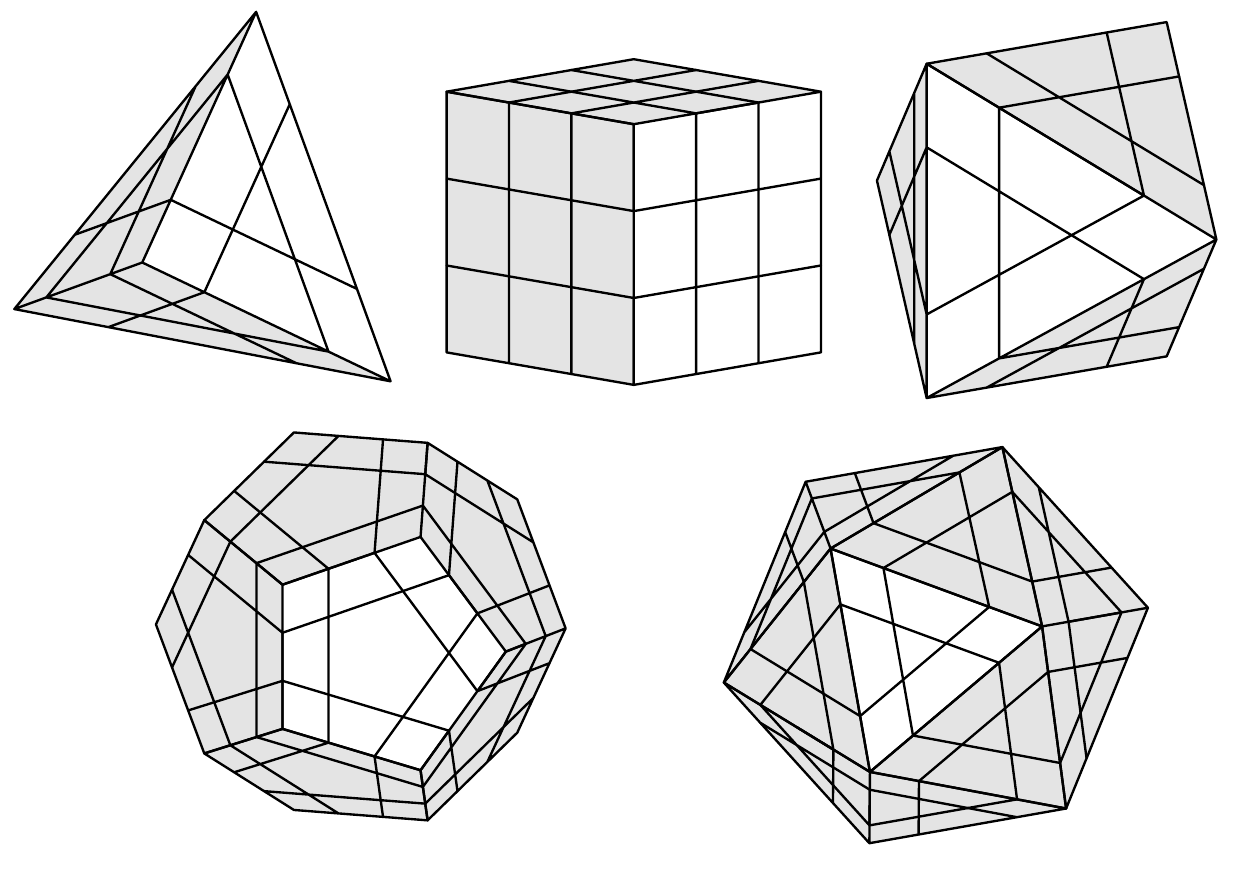}
\vspace{.5cm}
\caption{The Rubik's Platonic solids.}\label{figplato}
\end{center}
\end{figure}

\newpage

\section{Introduction}\label{secintro}
The \emph{Rubik's cube} is a mechanical puzzle invented by Ern\H{o} Rubik in $1974$. It has since become an iconic brain-teaser due to its overwhelming difficulty disguised by an elementary geometric premise. Indeed, the puzzle consists of a cube partitioned into colored pieces that can be permuted by rotating its faces. Starting from an arbitrary configuration, the goal is to reach the initial one characterized by monochromatic faces. There are, however, more than $43$ quintillions of possible configurations that are connected via face rotations. Planning a solution is, therefore, a surprisingly challenging task. 

Throughout the years, the Rubik's cube has inspired a plethora of variations and generalizations -- see \cite{nelson2018abstracting} for an overview. For example, similar physical puzzles have seen light in alternative shapes, such as the Platonic solids (Figure \ref{figplato}). The Rubik's dodecahedron and tetrahedron have been commercialized under the names \emph{Megaminx} and \emph{Halpern-Meier Pyramid} respectively. Even further, higher-dimensional analogues of the Rubik's cube -- the Rubik's \emph{hypercubes} -- have been simulated via software \cite{superlim} and solved (Figure \ref{fighyper}, right). 

From a mathematical perspective, the core interesting aspect of the Rubik's cube and of similar puzzles is that the set of possible configurations forms a group. The latter has face rotations as generators and the initial configuration as the identity element. Solving the puzzle can be rephrased as finding a path to the identity in the Cayley graph associated to the generators. The groups of the Rubik's cube and of its analogues manifest intricate structures that have been described and extensively studied \cite{joyner2008adventures, frey2020handbook}. However, each such group is typically discussed separately, raising the need for a unified mathematical treatment.   
\vspace{.5cm}
\begin{figure}[h!]
\begin{center}
\includegraphics[width=.37\textwidth]{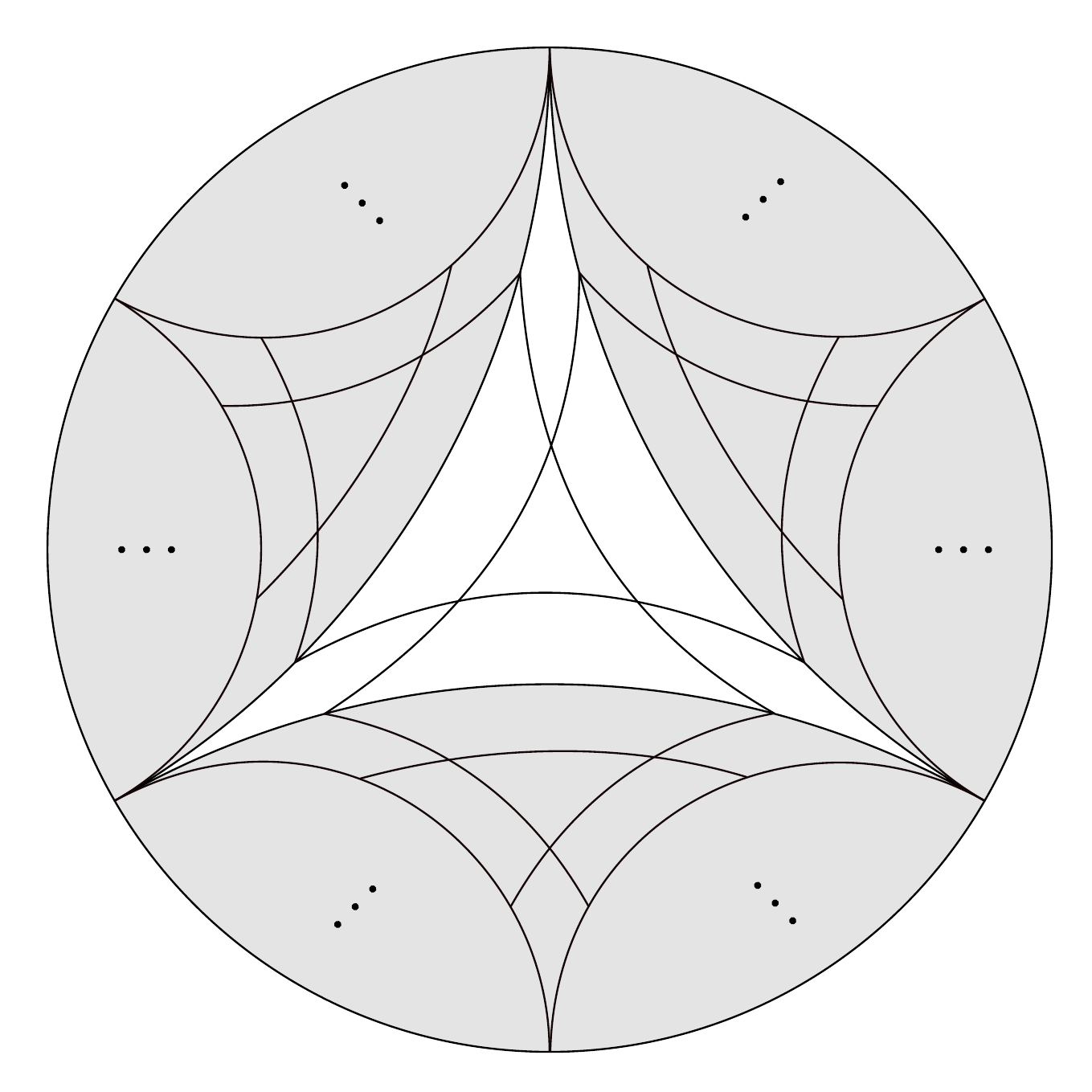} 
\hspace{2cm}
\includegraphics[width=.39\textwidth]{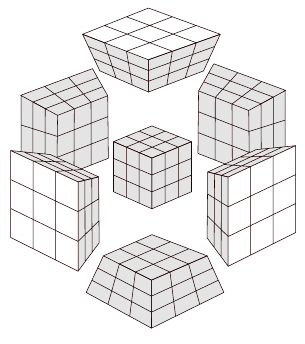}
\vspace{.5cm}
\caption{One the left, the Rubik's hyperbolic triangular tiling. On the right, the Rubik's $4$-dimensional hypercube projected to the $3$-dimensional space. }\label{fighyper}
\end{center}
\end{figure}

In this work, we generalize the Rubik's puzzle, together with the associated group, to arbitrary \emph{regular polytopes}. To this end, we rely on the language of \emph{abstract} polytopes in the sense of \cite{mcmullen2002abstract}. This elegant combinatorial formalism allows us to formulate a general theory that is applicable to polytopes in arbitrary dimensions and in arbitrary geometries. Along the way, we derive constrains on the Rubik's group of a regular polytope generalizing the classical ones for the Rubik's cube, and directly compute the group of Rubik's polygons and Rubik's hosohedra. We then focus on the case of the simplex in arbitrary dimension. In order to describe the associated group, we rely on an inductive argument involving the group of the facets, which are lower-dimensional simplices. Finally, in the last section we argue that the group of the Rubik's hypercube can be derived similarly since its facets are lower-dimensional hypercubes as well. The same result for the hypercube has been obtained with a similar inductive argument in \cite{vladislav2020generalization}. 

\subsection{Notation and Conventions}
We will make extensive use of permutations and symmetric groups. To this end, we denote by $\perm_k$ the symmetric group consisting of permutations of the finite set $\{ 1, \cdots, k\}$. We follow the functional convention and write the composition product in $\perm_k$ as $(\pi_1 \pi_2)(i) = \pi_1(\pi_2(i))$. Thus, all the expressions involving permutations have to be read from right to left. We denote by ${\rm sign} \colon \perm_k \rightarrow \{ \pm 1\} $ the sign homomorphism and by $\alt_k$ its kernel, i.e., the alternating group consisting of even permutations.

\section{Abstract Polytopes}\label{intro}
We start by recalling the foundations of the theory of abstract polytopes. Most of the content of this section is distilled from \cite{mcmullen2002abstract}, to which we refer the reader for further details. 

In order to introduce abstract polytopes, we sediment some terminology around partially ordered sets (\emph{posets} for short). Consider a poset $(\pos, \preceq)$. A totally ordered subset of $\pos$ of cardinality $i+1$ is called chain of length $i$. A flag is a maximal chain. Two elements $F,G \in \pos$ are incident if $F \preceq G$ or $G \preceq F$. $\pos$ is connected if for any pair of elements $F,G \in \pos$ there exit a sequence $F=F_0, \cdots, F_n=G \in \pos$ such that $F_i$ and $F_{i+1}$ are incident for every $i$. The interval between $G \preceq F$ is defined as $F / G = \{H \in \pos \ | \ G \preceq H \preceq F \}$. A necessary preliminary definition is the following.
\begin{defn}
An  $n$-dimensional \emph{abstract pre-polytope} is a poset $(\pos, \preceq)$ such that: 
\begin{itemize}
\item $\pos$ has a maximum, denoted by $F_n$, and a minimum, denoted by $F_{-1}$.
\item Each flag in $\pos$ has length $n+1$. 
\item Each interval in $\pos$ is connected.
\end{itemize}
\end{defn}
Elements of a pre-polytope are called faces. It can be shown that intervals of a pre-polytope are pre-polytopes as well. Based on this, the rank of a face is defined as the dimension (i.e., the length of any flag minus one) of $F / F_{-1}$. Faces of rank 
$0, 1, n-2, n-1$ are called vertices, edges, ridges and facets, respectively. Finally, we are ready to introduce the core object of study. 
\begin{defn}\label{diamond}
An $n$-dimensional \emph{abstract polytope} is an $n$-dimensional pre-polytope satisfying the following condition deemed \emph{diamond property}: for each $i < n$, if $G \preceq F$ are faces of ranks $i-1$ and $i+1$ respectively, then there exist exactly two faces $H$ of rank $i$ such that $G \prec H \prec F$.  
\end{defn}

\begin{center}
\begin{figure}[h!]
\[\begin{tikzcd}
	& F \\
	{H_1} && {H_2} \\
	& G
	\arrow[no head, from=3-2, to=2-1]
	\arrow[no head, from=3-2, to=2-3]
	\arrow[no head, from=2-3, to=1-2]
	\arrow[no head, from=1-2, to=2-1]
\end{tikzcd}\]
\caption{Hasse diagram of the interval $F / G$ where $G, H_i$ and $F$ are faces of an abstract polytope of rank $i-1, i$ and $i+1$ respectively. The shape of the diagram motivates the name of the `diamond' property. }
\end{figure}
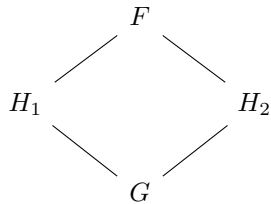
\end{center}
It follows from the diamond property that there exists a unique one-dimensional polytope deemed \emph{segment}. 

An isomorphism between abstract polytopes is a bijective monotone map between them, while an automorphism is an isomorphism between a polytope and itself. The groups of automorphisms is denoted by $\Gamma(\pos)$. By considering the natural action of the latter on subsets of $\pos$, it can be shown that it acts freely on the flags. This motivates the following notion. 
\begin{defn}
An abstract polytope is \emph{regular} if $\Gamma(\pos)$ acts transitively on the set of flags of $\pos$. 
\end{defn}
\begin{figure}
\begin{center}
\includegraphics[width=.85\textwidth]{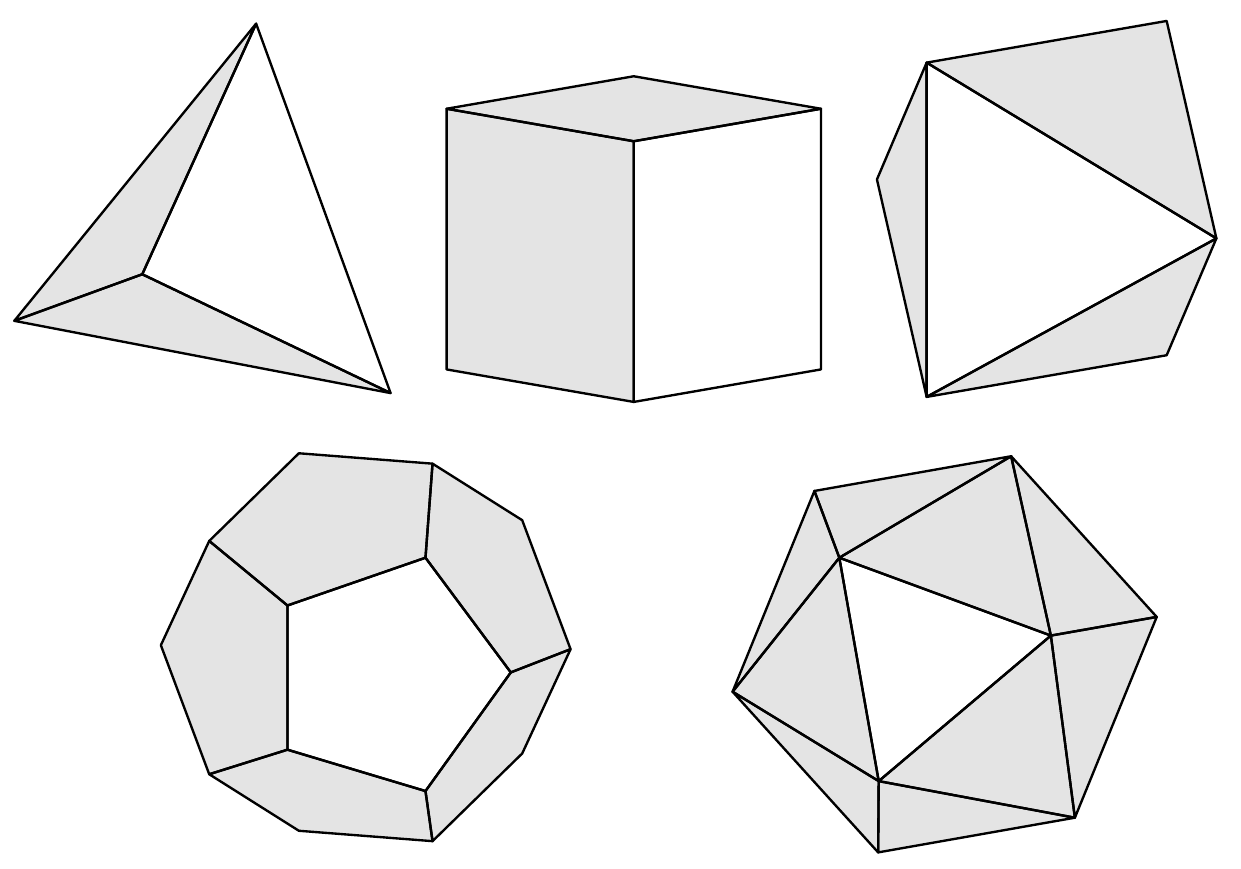}
\vspace{.5cm}
\caption{The Euclidean $3$-dimensional polytopes are known as the Platonic solids. There exist only five of them (up to isomorphism): tetrahedron, cube, octahedron, dodecahedron, and icosahedron.}\label{platonic}
\end{center}
\end{figure}

Consider a regular abstract polytope and fix a flag $F_{-1} \prec F_0 \prec \cdots \prec F_n$. For each $0\leq j < n$, by definition there exists a unique automorphism $\rho_j$ fixing $F_i$ for $i \not = j$ and sending $F_j$ to the only other face $H$ of rank $j$ such that $F_{j-1} \prec H \prec F_{j+1}$ (see Definition \ref{diamond}). It can be shown that $\Gamma(\pos)$ is generated by $\rho_0, \cdots, \rho_{n-1}$ and that the latter satisfy the following relations: 
\begin{itemize}
\item $\rho_j^2 = 1$.
\item $(\rho_j \rho_i)^2 = 1$ for $|j-i| > 1$.
\item $(\rho_{j-1} \rho_j)^{p_j} = 1 $ where $p_j$ is the number of faces of rank $i$ in the interval $F / G$ where $G$ and $F$ are faces of ranks $i-2$ and $i+1$ respectively (which is independent of $F$ and $G$). 
\end{itemize}
The invariants $p_j$ are referred to as Schl\"afli symbols. Note that $\Gamma(\pos)$ satisfies in general more relations than the above ones. In other words, it is a quotient of a Coxeter group of type $A_n$ with Coxeter numbers corresponding to the Schl\"afli symbols. The following is a distinguished subgroup of automorphisms. 
\begin{defn}
The \emph{rotation subgroup} $\Gamma^+(\pos) \subseteq \Gamma(\pos)$ is generated by $\rho_{j-1} \rho_{j}$ for $1 < j < n$. 
\end{defn}
 $\Gamma^+(\pos)$ is independent of the chosen flag and is a normal subgroup of $\Gamma(\pos)$ of index at most $2$. For the present work, the following facts on stabilizers will be necessary. For a face $F$, denote by $\Gamma(\pos, F)$ its stabilizer subgroup (and similarly for rotations).  It can be shown that $\Gamma(\pos, F)$ is isomorphic via restriction to the direct product $\Gamma(F / F_{-1}) \times \Gamma(F_n / F)$. In particular, if $F$ is a facet then $\Gamma(\pos, F)$ is isomorphic to $\Gamma(F / F_{-1})$ (and similarly for rotation subgroups), while if is $F$ a vertex then it is isomorphic to $\Gamma(F_n / F)$. Moreover, if $G$ is a ridge then the isomorphism specializes to $\Gamma(\pos, G) \simeq \Gamma(G / F_{-1}) \times C_2$, where $C_2$ is the groups of order $2$. If $F$ and $F'$ are the two facets incident to $G$ (see the diamond property) then the component in $C_2$ determines whether the given automorphism swaps $F$ with $F'$. In particular, it follows that $\Gamma(\pos, G) \cap \Gamma(\pos, F) = \Gamma(\pos, G) \cap \Gamma(\pos, F')$. 

 Lastly, we discuss duality for polytopes. The \emph{dual} polytope $\pos^\vee$ of $\pos$ is obtained by reversing the order relation, i.e., $F \preceq G$ in $\pos^\vee$ if and only if $G \preceq F$ in $\pos$. A polytope shares the automorphism group with its dual, and the same holds for the rotation subgroup in the case they are regular. 

\begin{exmp}\label{polygexmp}
{\rm(Polygons)} In dimension $n=2$ the only finite abstract polytopes are the \emph{polygons}, constructed as follows. For an integer $k$, the $k$-gon $P_k$ has $k$ cyclically ordered vertices and an edge between a vertex and the next one. The $k$-gon is regular, its automorphism group is the dihedral group $\Gamma(P_k) \simeq D_{2k}$ of order $2k$ and the rotation subgroup is the cyclic group $\Gamma^+(P_k) \simeq C_k$ of order $k$.     
\end{exmp}

\begin{figure}[h!]
\begin{center}
\begin{tikzpicture}
\node[anchor=south west,inner sep=0] at (0,0) {\includegraphics[width=.75\textwidth]{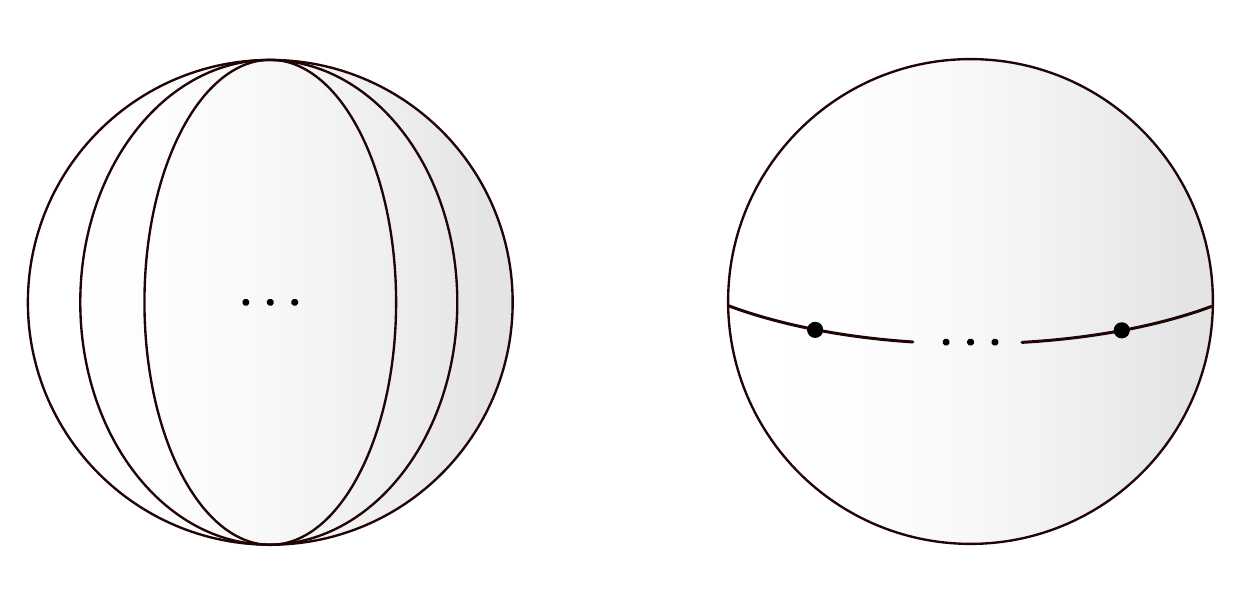}};
\node at (8.1,5) {$\dito(P_k)$};
\node at (2.3,5) {$\hoso(P_k)$};
\end{tikzpicture}
\caption{A hosotope (left) and a ditope (right) of dimension $3$, also referred to as hosohedron and dihedron respectively.}
\end{center}
\end{figure}

\begin{exmp}\label{dihedra}
{\rm(Hosotopes and Ditopes)} A \emph{hosotope} is a polytope with exactly two vertices. Given an $n$-dimensional polytope $\pos$, an $(n+1)$-dimensional hosotope $\hoso(\pos)$ can be constructed by formally adding two faces $\hoso(\pos) = \pos \sqcup \{V_1, V_2 \}$ such that $V_i \prec F$ for each face $F \in \pos \setminus \{F_{-1}, F_n \}$. Any hosotope is obtained via this construction. Moreover, $\hoso(\pos)$ is regular if and only if $\pos$ is. In this case, a remarkable property is that any automorphism of $\pos$ can  be extended to a rotation of $\hoso(\pos)$. Indeed, pick $\phi \in \Gamma(\pos)$. If $\phi \in \Gamma^+(\pos)$, then it is extended by fixing both the $V_i$'s. Otherwise, it is extended by swapping them. It is straightforward to prove that this extension actually results in an isomorphism of groups
\begin{equation}\label{suspiso}
\Gamma(\pos) \simeq \Gamma^+(\hoso(\pos)).
\end{equation}
Polytopes dual to hosotopes are known as \emph{ditopes}. In other words, a ditope is a polytope with exactly two facets. Again, a ditope $\dito(\pos)$ can be constructed from a polytope $\pos$ by formally adding two facets, and Equation \ref{suspiso} holds for ditopes as well. The precise duality relation reads: 
\begin{equation}
\hoso(\pos)^\vee \simeq \dito(\pos^\vee).
\end{equation}

\end{exmp}

 \section{The Rubik's Construction}
We now introduce the core construction of the present work. Consider an $n$-dimensional abstract regular polytope $(\pos, \preceq)$. 
\begin{defn}\label{rubik}
The \emph{Rubik's construction} $\rub(\pos)$ of $\pos$ is the set of pairs $(F,G)$ where $F,G \not = F_{-1}, F_n$ are faces such that $F \preceq G$.  
\end{defn}
In other words, $\rub(\pos)$ coincides with the ordering $\preceq$ seen as a binary relation on $\pos \setminus \{F_{-1}, F_n \}$. This represents an abstraction of the physical Rubik's cube puzzle. A pair $(F,G) \in \rub(\pos)$ such that $G$ is a facet abstracts a colored piece of the puzzle: $F$ represents the location of the piece (vertex, edge etc.), while $G$ represents the color. Based on this, for an element $(F,G) \in \rub(\pos)$, we refer to the components $F$ and $G$ as its \emph{location} and its \emph{color}, respectively. Differently from the physical puzzle, our definition includes colors of lower rank. This is done purely for the sake of elegance: since an automorphism of a polytope is determined by its action on facets, constraining $G$ to be a facet would result in an equivalent theory. We now describe the moves that can be performed on $\rub(\pos)$. 
 \begin{defn}\label{moves}
Let $H \in \pos$ be a facet and $\phi \in \Gamma^+ (H / F_{-1})$ be a rotation. By considering $\phi$ as an automorphism of $\pos$ stabilizing $H$ via the isomorphism $\Gamma^+(\pos, H) \simeq \Gamma^+ (H / F_{-1})$, define the \emph{move} $\mu_\phi$ associated to $\phi$ as the permutation of $\rub(\pos)$ given by: 
\begin{equation}\label{defmoves}
\mu_\phi(F,G) = \begin{cases}
(\phi(F), \phi(G)) & {\rm if} \ F \preceq H, \\
(F,G) & {\rm otherwise.}
\end{cases}
\end{equation}
The \emph{Rubik's group} $\rubg(\pos)$ is the permutation group on $\rub(\pos)$ generated by the moves $\mu_\phi$ as $H$ varies among the facets of $\pos$ and $\phi$ varies among the rotations stabilizing $H$. 
\end{defn}

\begin{figure}[h!]
\begin{center}
\vspace{.2cm}
\begin{tikzpicture}
\node[anchor=south west,inner sep=0] at (0,0) {\includegraphics[width=.8\textwidth]{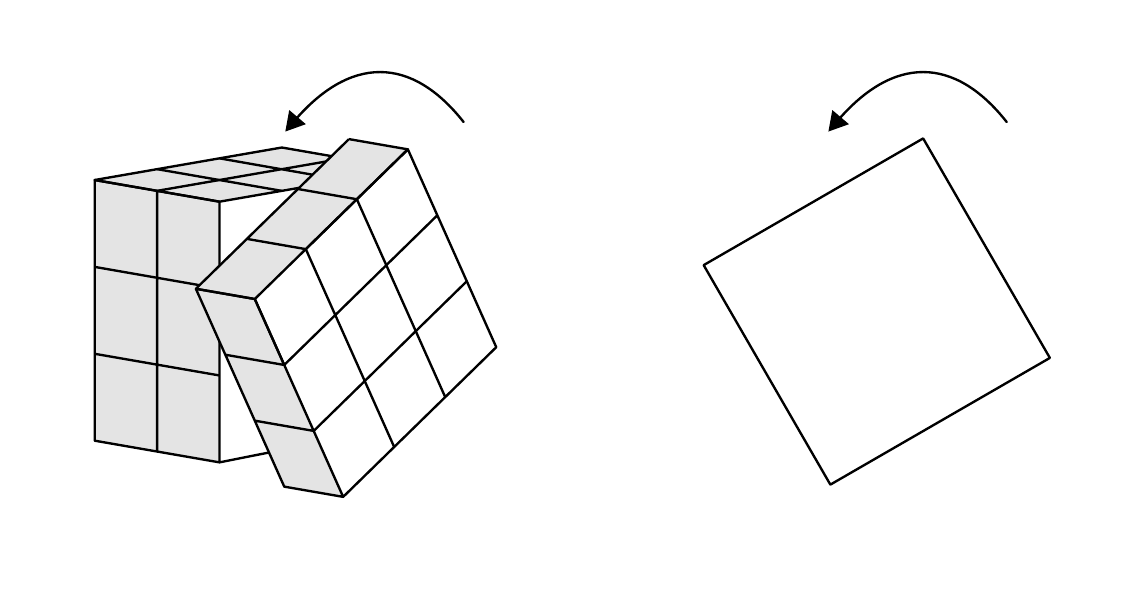}};
\node at (9.4,5.8) {$\phi \in \Gamma^+(F / F_{-1})$};
\node at (3.8,5.8) {$\mu_\phi \in \rubg(\pos)$};
\end{tikzpicture}
\caption{A rotation $\phi$ of a facet $F$ of $\pos$ (right) together with its induced move $\mu_\phi$ on the Rubik's construction $\rub(\pos)$ (left).}
\end{center}
\end{figure}

Just as in the classical physical puzzle, a move is performed by rotating a facet, resulting in the pieces with an incident location to permute accordingly, together with their colors. Note that when $\phi$ is seen as a rotation of $\pos$, the notation for $\mu_\phi$ is slightly ambiguous. Indeed, $\phi$ can in general stabilize multiple facets of $\pos$, and the associated move depends on the choice of the stabilized face. When defining moves, we will make this choice clear from the context.

\begin{exmp}\label{rubdihedra}
{\rm (Rubik's Ditopes)} A trivial example of Rubik's groups is given by ditopes (Example \ref{dihedra}). For a regular polytope $\pos$, consider the associated ditope $\dito(\pos)$. For any of the two facets $H \in \dito(\pos)$, we have that $H / F_{-1} = \pos$. In particular, Equation \ref{defmoves} always falls in its first case, giving an isomorphism of groups
\begin{equation}
\rubg(\dito(\pos)) \simeq \Gamma^+(\pos). 
\end{equation}
In a sense, Rubik's groups generalize rotation groups of regular polytopes. 
\end{exmp}

Rubik's groups can be interpreted as a combinatorial analogue of \emph{holonomy} groups from Riemannian geometry \cite{lee2018introduction}. In this picture, regular polytopes $\pos$ are thought as `discrete manifolds', whose tangent space at $F \in \pos$ is represented by the set $\{G \in \pos \ | \ F \preceq G \}$. The action by the move $\mu_\phi$ mimics the parallel transport of vectors tangent to a facet $H$ by part of a rotation $\phi$ of that facet. Therefore, the Rubik's group is reminiscent of the holonomy group of the tangent bundle of a Riemannian manifold.  

\subsection{Wreath Product Representation}\label{wrrep}
We now provide a general description of $\rubg(\pos)$ together with its first properties. To this end, recall that the wreath product $\Gamma \wr \perm_k$ between an arbitrary group $\Gamma$ and the symmetric group $\perm_k$ on $k$ letters is defined as the semidirect product between the power-group $\Gamma^k$ and $\perm_k$, where the latter acts on the former by permuting components. Our goal is to embed $\rubg(\pos)$ into a direct product of wreath products. Such embedding will not be natural, meaning that its definition requires fixing coordinates on $\pos$ in an appropriate sense. 
\begin{defn}\label{coord}
A \emph{system of coordinates} on $\pos$ consists of regular $i$-dimensional polytopes $\mathcal{A}_i, \mathcal{B}_i$ for $0\leq i<n$ and isomorphisms $ \ \mathcal{A}_i \simeq F_n / F$, $\mathcal{B}_i \simeq F / F_{-1}$ for every face $F$ of rank $i$. Additionally, the following condition must hold: for each pair of vertices $F,G$, there exists a rotation $\phi \in \Gamma^+(\pos)$ sending $F$ to $G$ such that the following diagram commutes: 

\begin{center}
\begin{tikzcd}
	{F_n / F} && {F_n / G} \\
	& {\mathcal{A}_0}
	\arrow["\phi", from=1-1, to=1-3]
	\arrow["\simeq"', from=1-1, to=2-2]
	\arrow["\simeq", from=1-3, to=2-2]
\end{tikzcd}
\end{center}
\end{defn}

Assume that $\pos$ is finite and fix a system of coordinates. The action by $\rubg(\pos)$ preserves the rank of the locations of elements in $\rub(\pos)$. Thus, an element $\mu \in \rubg(\pos)$ of the Rubik's group induces a permutation $\sigma_i$ of faces  of a given rank $i$. If a face $G$ is sent to some $F$ via such permutation, the restriction of $\mu$ to colors defines an isomorphism $ F_n / G \simeq F_n / F$. The latter can be seen as an automorphism $\tau_i^F \in \Gamma(\mathcal{A}_i)$ via the system of coordinates. Moreover, suppose that $G,F$ are vertices (i.e., $i=0$) and that $\mu=\mu_\psi$ is a move associated to rotation $\psi \in \Gamma^+ (\pos, H)$ of a facet $H$ incident to $F$. If $\phi \in \Gamma^+(\pos)$ is the rotation from Definition \ref{coord} then $\phi \psi$ is a rotation stabilizing $G$ whose restriction to $F_n / G \simeq \mathcal{A}_0$ coincides with $\tau_0^F$. It follows from the discussion at the end Section \ref{intro} that $\tau_0^F$ is a rotation as well. Putting everything together, given an ordering of the faces of $\pos$, the association $\mu \mapsto (\tau_i, \sigma_i)_{1 \leq i < n-1}$ defines an embedding of the Rubik's group into:
\begin{equation}\label{wrprod}
\left( \Gamma^+(\mathcal{A}_0)\wr \perm_{n_0} \right) \times \prod_{1 \leq i < n-1} \Gamma(\mathcal{A}_i) \wr \perm_{n_i}  
\end{equation}
where $n_i$ denotes the number of faces of rank $i$ in $\pos$. Although isolating vertices makes Definition \ref{coord} and Equation \ref{wrprod} less natural, it is crucial in order to state the next result (Proposition \ref{wrprop}) in a stronger form. 

We now provide a partial description of the image of the embedding of the Rubik's group into the product of wreath products. To this end, recall that the canonical character of a wreath product $\Gamma \wr \perm_k$ is defined as the group homomorphism: 
\begin{equation*}
\begin{aligned}
\Gamma \wr \perm_k & \ \overset{\chi}{\longrightarrow} & \Gamma / \Gamma' \\
(\gamma, \sigma) & \ \longmapsto & \prod_{1 \leq j \leq k} \overline{\gamma^j}
\end{aligned}
\end{equation*}
where $\gamma= (\gamma^1, \cdots, \gamma^k) \in \Gamma^k$, $\Gamma'$ denotes the subgroup of $\Gamma$ generated by commutators $[g,h ] = g^{-1}h^{-1}gh$, and $\overline{\phantom{a}}$ denotes classes in the quotient group $\Gamma / \Gamma'$. Thus, $\Gamma / \Gamma'$ is the Abelianization of $\Gamma$ i.e., its largest Abelian quotient. The following is a fundamental result on Rubik's groups, constraining the latter via the canonical character. Both the statement and the proof of the result are closely related to the concept of Artin transfer in group theory (see \cite{isaacs2008finite} for an overview).  
\begin{prop}\label{wrprop}
For each direct factor of Equation \ref{wrprod}, the image of $\rubg(\pos)$ lies in the kernel of the canonical character. 
\end{prop}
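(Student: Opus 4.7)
The plan is to check the statement on a generating set of $\rubg(\pos)$, namely on the moves $\mu_\phi$ associated to a facet $H$ and a rotation $\phi \in \Gamma^+(H/F_{-1})$. Since the canonical character $\chi$ is a group homomorphism, this reduction suffices. Fix a direct factor of Equation \ref{wrprod} indexed by a rank $i$. Because $\mu_\phi$ acts as the identity on every pair whose location is not below $H$, the character collapses to
\begin{equation*}
\chi(\mu_\phi) \;=\; \prod_{\substack{F \preceq H \\ \mathrm{rank}(F)=i}} \overline{\tau_i^F}.
\end{equation*}

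First I would reorganize this product along the orbits of $\phi$ acting on the rank-$i$ faces of $H$. Inside a cycle $(F_1, \ldots, F_{k_c})$ with $\phi(F_j) = F_{j+1 \bmod k_c}$, the definition $\tau_i^{F_{j+1}} = \alpha_{F_{j+1}}^{-1} \phi\, \alpha_{F_j}$ makes the product $\tau_i^{F_1}\, \tau_i^{F_{k_c}} \cdots \tau_i^{F_2}$ telescope to $\alpha_{F_1}^{-1} \phi^{k_c} \alpha_{F_1}$, so in the abelianization each cycle contributes the class of $\phi^{k_c}|_{F_n/F_1}$ transported through the coordinate isomorphism. This is well-defined because $\phi^{k_c}$ stabilizes both $F_1$ and $H$, so by the facet-stabilizer isomorphism $\Gamma(F_n/F_1, H) \simeq \Gamma(H/F_1)$ from Section \ref{intro} the restriction lands in the expected subgroup.

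Second, I would interpret the resulting cycle sum $\sum_c \overline{\phi^{k_c}|_{F_n/F_1(c)}}$ as an Artin transfer: namely, it matches the image of $\phi$ under the Artin transfer $\Gamma^+(H/F_{-1}) \to \Gamma^+(H/F_{-1}, F_0)^{\mathrm{ab}}$ to the stabilizer of a chosen rank-$i$ face $F_0 \preceq H$, followed by the projection induced by the product decomposition $\Gamma^+(H/F_{-1}, F_0) \simeq \Gamma^+(F_0/F_{-1}) \times \Gamma^+(H/F_0)$ and the inclusion $\Gamma^+(H/F_0) \hookrightarrow \Gamma(F_n/F_0) \simeq \Gamma(\mathcal{A}_i)$. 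By Artin's theorem the transfer factors through $\Gamma^+(H/F_{-1})^{\mathrm{ab}}$, and the proposition becomes the statement that the induced homomorphism $\Gamma^+(H/F_{-1})^{\mathrm{ab}} \to \Gamma(\mathcal{A}_i)^{\mathrm{ab}}$ vanishes.

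The main obstacle is this last step. I would try to verify the vanishing on the Coxeter generators $\rho_{j-1}\rho_j$ of $\Gamma^+(H/F_{-1})$, where the cycle structure of the short rotation acting on rank-$i$ faces of $H$ is tightly controlled by the Schl\"afli symbol $p_j$ and the diamond property, and the monodromies $\phi^{k_c}$ lie in a stabilizer whose color component should cancel across cycles. The Artin-transfer framework from \cite{isaacs2008finite} is designed to absorb the coordinate choices implicit in the cycle representatives, so the cleanest route is probably to evaluate the projected transfer directly inside the product stabilizer, or alternatively to identify it with a transfer on a smaller interval of $\pos$ for which vanishing can be established by induction on the rank.
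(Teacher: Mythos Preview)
Your setup is sound: reducing to generators, and the telescoping computation along $\phi$-cycles is exactly right. But you get stuck precisely where the paper's key idea kicks in, and the route you propose (evaluating the transfer on Coxeter generators, or an inductive reduction) is much harder than necessary.

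The point you are missing is that your telescoping computation already proves something stronger than you realize: it shows that $\chi_i(\mu)$ is \emph{independent of the system of coordinates}. Indeed, if $\omega_F$ and $\gamma_F$ are two coordinate isomorphisms $\mathcal{A}_i \simeq F_n/F$, then along a cycle $(F_1\,\cdots\,F_m)$ of $\sigma_i$ the same telescoping you wrote down gives
\[
\prod_j \overline{\tau_i^{F_j,\omega}} \;=\; \overline{\omega_{F_1}^{-1}\gamma_{F_1}} \cdot \Bigl(\prod_j \overline{\tau_i^{F_j,\gamma}}\Bigr) \cdot \overline{\gamma_{F_1}^{-1}\omega_{F_1}} \;=\; \prod_j \overline{\tau_i^{F_j,\gamma}},
\]
so $\chi_i^\omega = \chi_i^\gamma$ on all of $\rubg(\pos)$.

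Once you have coordinate-independence, the vanishing is immediate: for a fixed move $\mu_\phi$ with $\phi \in \Gamma^+(\pos,H)$, build a system of coordinates \emph{adapted to $\phi$} by choosing, for each $\phi$-orbit of rank-$i$ faces below $H$, an arbitrary coordinate isomorphism at one representative and then transporting it along the orbit by $\phi$ itself. In these coordinates every $\tau_i^F$ is the identity, so $\chi_i(\mu_\phi)=1$ trivially. This is the paper's argument, and it completely bypasses the need to analyze the transfer or the Schl\"afli combinatorics.

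In short: your Artin-transfer interpretation is correct and is even mentioned in the paper as the conceptual backdrop, but the actual proof does not evaluate the transfer. It uses coordinate-independence (which you have) plus a clever coordinate choice (which you are missing).
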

\begin{proof}
Our strategy is to first show that the canonical characters of the direct factors do not depend on the choice of coordinates. To see this, fix two system of coordinates whose isomorphisms we denote by $\omega_F: \mathcal{A}_i \overset{\simeq}{\longrightarrow} F_n / F$, $\gamma_F: \mathcal{A}_i \overset{\simeq}{\longrightarrow} F_n / F$ and whose canonical characters we denote by $\chi_i^\omega, \chi_i^\gamma : \ \rubg(\pos) \rightarrow \Gamma(\mathcal{A}_i) / \Gamma(\mathcal{A}_i)'$, and similarly for vertices. Given an element $\mu \in \rubg(\pos)$ and a face $F \in \pos$ of rank $i$, the two system of coordinates induce automorphisms $\tau_i^F , \theta_i^F \in \Gamma(\mathcal{A}_i)$. If the induced permutation $\sigma_i$ of faces of rank $i$ sends $G$ to $F$ then by construction the following relation holds:
 \begin{equation}\label{conjcoord}
\tau_i^F =  {\omega_F}^{-1} \gamma_F \theta_i^F {\gamma_G}^{-1} \omega_G. 
 \end{equation}
 Since $\sigma_i$ factorizes into a product of cycles with mutually disjoint supports, in order to prove the desired independence we can assume that $\sigma_i = (F_1 \ \cdots \ F_m  )$ is a cycle. But then it follows from Equation \ref{conjcoord} that the canonical characters satisfy:
\begin{equation}
\chi_i^{\omega}(\mu) = \prod_{1 \leq j \leq n_i} \left[ \tau_i^{F_j} \right] = [ {\omega_{F_1}}^{-1} \gamma_{F_1} ] \ \chi_i^{\gamma}(\mu) \ [ {\gamma_{F_1}}^{-1} \omega_{F_1} ] =  \chi_i^{\gamma}(\mu) 
\end{equation}
 as desired. 

To conclude, it suffices to show that the generating moves from Definition \ref{moves} lie in the kernel of each $\chi_i$. Given a move $\mu_\phi \in \rubg(\pos)$ associated to $\phi \in \Gamma^+(\pos, F)$ where $F$ is a facet, it is straightforward to construct a system of coordinates such that $\tau_G(\mu_\phi) = 1$ for all faces $G$. The canonical character of $\mu_\phi$ with respect to such a system of coordinates equals to $1$ and the desired claim follows. 
\end{proof}

\subsection{On the Non-Rotational Version} 
So far, we have defined the Rubik's group via rotational automorphisms, which is in line with the physical Rubik's puzzle. It is of course possible to define a non-rotational version $\rubgm(\pos)$ of the Rubik's group by allowing $\phi$ in Definition \ref{moves} to range among arbitrary automorphisms in $\Gamma(H / F_{-1})$. The wreath product representation from Section \ref{wrrep}
as well as Proposition \ref{wrprop} hold for $\rubgm(\pos)$, albeit without the rotational constraint on vertices in Equation \ref{wrprod}. 

We start by computing the non-rotational Rubik's group of the simplest regular polytopes, i.e., the polygons $P_k$ from Example \ref{polygexmp}. Note that since facets of polygons are segments with no non-trivial rotations, $\rubg(P_k)$ is trivial instead. The following result describes the group of Rubik's polygons completely. 

\begin{prop}\label{rubpoly}
There is an isomorphism
\begin{equation}
\rubgm(P_k) \simeq \begin{cases}
\perm_k & {\rm if} \ k {\rm \ is \ even} \\
{\rm Ker}(\chi) & {\rm if} \ k {\rm \ is \ odd}
\end{cases}
\end{equation}
where $\chi \colon  C_2 \wr \perm_k \rightarrow C_2$ denotes the canonical character. 
\end{prop}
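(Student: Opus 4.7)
The plan is to apply the wreath product representation from Section \ref{wrrep} in its non-rotational form. Since $P_k$ is two-dimensional and the only intermediate rank is that of the vertices, the embedding \eqref{wrprod} reduces to $\rubgm(P_k) \hookrightarrow \Gamma(\mathcal{A}_0) \wr \perm_{n_0} = C_2 \wr \perm_k$, because $\mathcal{A}_0$ is the segment (so $\Gamma(\mathcal{A}_0) = C_2$) and $n_0 = k$. Proposition \ref{wrprop} then gives one of the two required inclusions: the image of $\rubgm(P_k)$ is contained in $\ker(\chi)$.

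To describe the generators concretely, one observes that the reflection of an edge $e_i$ extends to the reflection of $P_k$ across the perpendicular bisector of $e_i$. This swaps $v_i \leftrightarrow v_{i+1}$ and the two neighbouring edges $e_{i-1} \leftrightarrow e_{i+1}$, while fixing $e_i$. Hence the $\perm_k$-component of $\mu_i$ in $C_2 \wr \perm_k$ is the adjacent transposition $s_i = (i, i+1)$, and its $C_2^k$-component depends on how each coordinate isomorphism $\omega_{v_j} : \mathcal{A}_0 \to F_n / v_j$ labels the two edges at $v_j$ as ``left'' and ``right''. In particular the $s_i$'s generate $\perm_k$, so the projection $\rubgm(P_k) \to \perm_k$ is surjective.

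The key idea is an alternating coordinate system: declare the ``left'' edge at $v_j$ to be $e_{j-1}$ when $j$ is odd, and $e_j$ when $j$ is even (with ``right'' being the other edge). A short calculation at the two vertices of $e_i$ shows that, for every $i = 1, \dots, k-1$, the reflection $\mu_i$ sends left to left and right to right at both $v_i$ and $v_{i+1}$, hence has trivial $C_2^k$-component. The seam move $\mu_k$ behaves differently depending on the parity of $k$, because $v_k$ is labelled consistently with the alternation exactly when $k$ is even. If $k$ is even, the computation above goes through for $\mu_k$ as well, so every generator of $\rubgm(P_k)$ lies in the subgroup $\perm_k \hookrightarrow C_2 \wr \perm_k$; combined with surjectivity onto $\perm_k$, this yields $\rubgm(P_k) \simeq \perm_k$.

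If $k$ is odd, both $v_k$ and $v_1$ are ``odd'' and the alternation breaks at the seam: the same local computation gives $\mu_k$ a $C_2^k$-component with non-trivial entries precisely at positions $1$ and $k$, whereas $\mu_1, \dots, \mu_{k-1}$ remain unsigned. The latter therefore generate a copy of $\perm_k$ inside $\rubgm(P_k)$. Expressing the transposition $(1, k)$ as a word in the $s_i$'s and multiplying $\mu_k$ by its inverse produces a non-trivial pure sign-flip, and conjugating it by $\perm_k$ yields every two-entry sign flip. These span the even-weight subspace of $C_2^k$, so $\rubgm(P_k)$ contains the semidirect product of the even-weight subspace with $\perm_k$, which is exactly $\ker(\chi)$; together with Proposition \ref{wrprop} this gives $\rubgm(P_k) \simeq \ker(\chi)$. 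The step requiring most care is the parity-dependent coordinate calculation: once it is carried out, the odd/even dichotomy is simply the obstruction to a globally consistent alternating labelling around the cyclically-ordered vertices of $P_k$.
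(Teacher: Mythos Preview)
Your argument is correct and takes a genuinely different route from the paper's. The paper fixes the \emph{cyclic} coordinate system in which $G_{i-1}$ and $G_i$ at the vertex $F_i$ are always identified with the same pair in $\mathcal{A}_0$; with that choice every generator $\mu_{\phi_i}$ has \emph{non-trivial} sign component at both $i$ and $i+1$. For even $k$ the paper then isolates the invariant ``$\tau^j \equiv \sigma^{-1}(j)-j \pmod 2$'', checks that it defines a subgroup and is preserved by the generators, and concludes that $\tau$ is determined by $\sigma$. For odd $k$ it exhibits the explicit back-and-forth word $\mu_{\phi_{i+1}}\cdots\mu_{\phi_{k+i-1}}\cdots\mu_{\phi_i}$ of length $2(k-1)$ to produce a pure two-entry sign flip.

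Your alternating coordinate system absorbs the sign of $\mu_1,\dots,\mu_{k-1}$ into the identifications themselves, so these generators become literal adjacent transpositions in $\perm_k \subset C_2 \wr \perm_k$. The parity of $k$ then enters transparently as the obstruction to extending the $2$-colouring across the seam edge $e_k$: if $k$ is even the colouring closes up and $\mu_k$ is unsigned too, giving the even case in one line; if $k$ is odd, $\mu_k$ carries the residual sign and a single multiplication against the unsigned word for $(1\,k)$ already yields the pure sign flip, avoiding the longer explicit word the paper uses. What the paper's choice buys is that its invariant in the even case is visibly coordinate-free (it is the cocycle condition that would fail to be a subgroup condition for odd $k$), whereas your argument hides this in the choice of coordinates; conversely, your approach makes the underlying topological reason for the dichotomy---the first cohomology of the $k$-cycle with $\mathbb{F}_2$ coefficients---much more apparent.
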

\begin{proof}
Let $F_1, \cdots, F_k$ and $G_1, \cdots, G_k$ be the vertices and edges of $P_k$ listed in their cyclic order, meaning that $F_i \preceq G_i \succeq F_{i+1}$ for all $i$. We can then define a system of coordinates on $P_k$ by setting $\mathcal{A}_0 = F_n / F_1$ and by identifying $G_{i-1},  G_i \in F_n / F_i$ with $G_k, G_{1} \in \mathcal{A}_0$ respectively. The construction in Section \ref{wrrep} provides then an embedding $\rubgm(P_k) \hookrightarrow C_2 \wr \perm_k$ where $C_2 \simeq \Gamma(\mathcal{A}_0)$ is the cyclic group of order two. Since $G_i / F_{-1}$ is a segment for all $i$, there is a unique non-trivial automorphism $\phi_i \in \Gamma(P_k, G_i) \simeq C_2$ fixing $G_i$. The move $\mu_{\phi_i} \in \rubgm(P_k)$ corresponds to the element $(\tau, \sigma) \in C_2 \wr \perm_k$ where $\sigma = (i \ \ i+1)$ is a transposition and the $j$-th component $\tau^j \in C_2$ of $\tau$ is non-trivial only for $j=i, i+1$. Since transpositions generate the symmetric group, the projection $\rubgm(P_k) \rightarrow \perm_k$ is surjective. We now distinguish the two cases.

\textit{Case 1}: $k$ is even. In this case, the elements $(\tau, \sigma) \in C_2 \wr \perm_k$ lying in the image of $\rubgm(P_k)$ satisfy the following condition: $\tau^j \in C_2$ coincides with $\sigma^{-1}(j) - j$ modulo $2$. Since $k$ is even, this condition defines a subgroup of $C_2 \wr \perm_k$ and it is immediate to verify that it is satisfied by the moves $\mu_{\phi_i}$. This means that $\tau$ is determined by $\sigma$, which in turn implies that the projection $\rubgm(P_k) \rightarrow \perm_k$ is an isomorphism, as desired.  

\textit{Case 2}: $k$ is odd. Given $i$, consider the element of $\rubgm(P_k)$ defined by the following product of $2(k-1)$ moves: 
\begin{equation}\label{polygmovecyc}
  \mu = \mu_{\phi_{i+1}} \cdots \mu_{\phi_{k+i-3}} \mu_{\phi_{k+i-2}}  \mu_{\phi_{k+i-1}} \cdots \mu_{\phi_{i+1}} \mu_{\phi_i}
\end{equation}
where all the indices are modulo $k$. Intuitively, $\mu$ goes back and forth through all the moves in their cyclic order. It is immediate to check that $\mu$ corresponds to $(\tau, \sigma) \in C_2 \wr \perm_k$, where $\sigma = 1$ and $\tau^j$ is non-trivial only for $j = i, i+1$. These elements generate all the $(\tau, 1)$'s in the kernel of $\chi$. Together with the fact that the projection $\rubgm(P_k) \rightarrow \perm_k$ is surjective, this shows that the image of $\rubgm(P_k)$ in $C_2 \wr \perm_k$ is ${\rm Ker}(\chi)$, as desired. 
\end{proof}

We now draw a connection between the rotational and the non-rotational version of the Rubik's group. To this end, we will make use of hosotopes (Example \ref{dihedra}), and in particular of the group isomorphism given by Equation \ref{suspiso}. The following provides a partial description of the Rubik's group of a hosotope $\hoso(\pos)$ in terms of the non-rotational Rubik's group of $\pos$.  



\begin{prop}\label{suspemb}
There is a group embedding:
\begin{equation}\label{suspembprop}
\rubg(\hoso(\pos)) \hookrightarrow \rubg^-(\pos ) \times \Gamma(\pos),
\end{equation}
with the projection $\rubg(\hoso(\pos)) \rightarrow \rubg^-(\pos )$ being surjective. If $\Gamma(\pos)$ is generated by stabilizers of facets of $\pos$, then the projection $\rubg(\hoso(\pos)) \rightarrow \Gamma(\pos)$ is surjective as well. 
\end{prop}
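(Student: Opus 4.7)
The key observation is the disjoint decomposition $\rub(\hoso(\pos)) = \rub(\pos) \sqcup \{(V_i, G)\}$, where the second piece consists of pairs with $V_i$ one of the two new vertices of $\hoso(\pos)$ and $G$ a non-extremal face incident to $V_i$. This holds because $V_1, V_2$ are the only new non-extremal faces and each is comparable only to itself (among the two) and to the non-extremal faces of $\pos$. Moreover, facets of $\hoso(\pos)$ coincide with facets of $\pos$, and for such a facet $H$ the interval $H/F_{-1}$ in $\hoso(\pos)$ is the hosotope of the corresponding interval in $\pos$. By Equation \ref{suspiso}, every generator $\mu_\phi$ of $\rubg(\hoso(\pos))$ is then parametrized by a (possibly non-rotational) automorphism $\tilde\phi \in \Gamma(H/F_{-1})$; since $V_i \preceq H$ always, $\mu_\phi$ acts on vertex pairs by $\mu_\phi(V_i, G) = (\phi(V_i), \phi(G))$ -- swapping the $V_i$'s iff $\tilde\phi$ is non-rotational -- and restricts on $\rub(\pos)$ to the non-rotational move $\mu_{\tilde\phi}$.

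Using this, the plan is to define $\pi_1 : \rubg(\hoso(\pos)) \to \rubg^-(\pos)$ as restriction of the permutation action to $\rub(\pos)$. On generators it sends $\mu_\phi \mapsto \mu_{\tilde\phi}$, and since $\tilde\phi$ ranges over all of $\Gamma(H/F_{-1})$ as $H$ varies over facets, $\pi_1$ is surjective. The second projection $\pi_2 : \rubg(\hoso(\pos)) \to \Gamma(\pos)$ I would define by letting $\pi_2(\mu)(G)$ be the color component of $\mu(V_1, G)$; on generators this equals $\hat\phi$, the automorphism of $\pos$ corresponding to $\phi$ via \ref{suspiso}, and the assignment extends multiplicatively to a homomorphism. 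Under the hypothesis that $\Gamma(\pos)$ is generated by facet stabilizers, each generator $\psi \in \Gamma(\pos, H)$ is realized as $\pi_2(\mu_\phi)$ for $\phi$ corresponding to $\psi|_H \in \Gamma(H/F_{-1})$, so $\pi_2$ is surjective as well.

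The main obstacle is injectivity of $\pi_1 \times \pi_2$. By the above, any $\mu \in \rubg(\hoso(\pos))$ either swaps $V_1, V_2$ uniformly (independently of $G$) or fixes them, yielding a homomorphism $\epsilon : \rubg(\hoso(\pos)) \to C_2$. The essential identity is $\epsilon = s \circ \pi_2$, where $s : \Gamma(\pos) \to \Gamma(\pos)/\Gamma^+(\pos)$ is the sign. On a generator $\mu_\phi$, both sides are nontrivial iff $\tilde\phi$ is non-rotational in $\Gamma(H/F_{-1})$ iff $\hat\phi$ is non-rotational in $\Gamma(\pos)$; the last equivalence requires that the stabilizer isomorphism $\Gamma(\pos, H) \cong \Gamma(H/F_{-1})$ identifies $\Gamma^+(\pos) \cap \Gamma(\pos, H)$ with $\Gamma^+(H/F_{-1})$, which follows from the Coxeter description since the generators $\rho_0, \ldots, \rho_{n-2}$ stabilize $H$ and enjoy identical sign parity in both groups. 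Once $\epsilon = s \circ \pi_2$ is established, an element $\mu \in \ker(\pi_1 \times \pi_2)$ forces $\epsilon(\mu)$ to be trivial, so $\mu$ neither swaps vertices nor alters any color component, whence $\mu = 1$.
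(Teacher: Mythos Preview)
Your proof is correct and follows essentially the same route as the paper's: both decompose $\rub(\hoso(\pos))$ into $\rub(\pos)$ together with the vertex pairs (the paper writes the latter as $\pos \sqcup \pos$), define the two projections by restriction to these pieces, and argue injectivity from the fact that the swap of $V_1,V_2$ is governed by the rotational parity of the induced automorphism $\hat\phi\in\Gamma(\pos)$. Your explicit homomorphism $\epsilon$ and the identity $\epsilon = s\circ\pi_2$ simply unpack what the paper compresses into the sentence ``$\hat{\phi}$ carries all the information of how $\mu_\phi$ acts on $\pos \sqcup \pos$.''
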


\begin{proof}
Recall that $\hoso(\pos) = \pos \sqcup \{V_1, V_2 \}$. In particular, for a face $F \in \pos$ the interval $F_n / F$ is the same when $F$ is seen as a face of $\pos$ or of $\hoso(\pos)$. On the other hand, if $F = V_i$ then there is an isomorphism $F_n / F \simeq \pos$. This implies that there exists a decomposition 
\begin{equation}\label{suspdec}
\rub(\hoso(\pos) ) \simeq \rub(\pos) \sqcup \pos   \sqcup \pos.
\end{equation}

Now pick a move $\mu_\phi \in \rubg(\hoso(\pos))$ as in Definition \ref{moves} associated to a rotation $\phi \in \Gamma^+(\hoso(\pos), F)$, where $F$ is a facet of $\hoso(\pos)$. Then $F$ is a facet of $\pos$ as well, $\phi$ restricts on to an automorphism $\hat{\phi} \in \Gamma(\pos, F)$ and $\mu_\phi$ restricts to $\mu_{\hat{\phi}}$ on $\rub(\pos)$. Moreover, $\phi$ fixes both $V_1$ and $V_2$ if and only if $\hat{\phi} \in \Gamma^+(\pos, F)$, in which case $\mu_\phi$ restricts to $\hat{\phi}$ on both the copies of $\pos$ in Equation \ref{suspdec}. If $\hat{\phi} \not \in \Gamma^+(\pos, F)$ then $\phi$ swaps $V_1$ and $V_2$, and induces the isomorphism $\hat{\phi}$ between the two copies of $\pos$. This implies that $\hat{\phi}$ carries all the information of how $\mu_\phi$ acts on $\pos \sqcup \pos$. By associating $\mu_\phi \mapsto (\mu_{\hat{\phi}}, \hat{\phi})$ we obtain a group embedding as in Equation \ref{suspembprop}. Since any automorphism in $\Gamma(\pos, F)$, together with its corresponding move on $\rub(\pos)$, can be obtained by restricting a rotation in $\Gamma^+(\hoso(\pos), F)$ due to Equation \ref{suspiso}, the projection $\rubg(\hoso(\pos)) \rightarrow \rubg^-(\pos )$ is surjective. The last claim is immediate by construction, since $\hat{\phi}$ always fixes a facet of $\pos$.
\end{proof}


Next, we completely characterize the Rubik's group of hosotopes of dimension $3$, also known as hosohedra. We will denote by $\mathleftmoon = \hoso(P_k)$ the hosohedron associated to the $k$-gon, which has $2$ vertices, $k$ edges and $k$ facets. The following shows that for hosohedra, the embedding from Proposition \ref{suspemb} is an isomorphism. 
\begin{prop}
There is an isomorphism 
\begin{equation}
\rubg(\mathleftmoon) \simeq 
\begin{cases}
\perm_k \times C_k
 & {\rm if } \  k {\rm \ is \ even} \\
{\rm Ker}(\chi) \times C_k & {\rm if } \ k {\rm \  is \ odd}
\end{cases}
\end{equation}
where $C_k \simeq \Gamma^+(P_k)$ denotes the cyclic group of order $k$ and $\chi \colon   C_2 \wr \perm_k \rightarrow C_2$ denotes the canonical character. 
\end{prop}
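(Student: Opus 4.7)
The plan is to apply Proposition~\ref{suspemb} with $\pos = P_k$, yielding an embedding $\rubg(\mathleftmoon) \hookrightarrow \rubg^-(P_k) \times \Gamma(P_k)$, whose first factor Proposition~\ref{rubpoly} identifies with $\perm_k$ ($k$ even) or $\ker(\chi)$ ($k$ odd). The even/odd dichotomy in the statement is then inherited directly from Proposition~\ref{rubpoly}, while the $C_k$ factor must be isolated inside the image.

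First I would confirm that the two projections are surjective: the first is immediate from Proposition~\ref{suspemb}, and for the second each generator $\mu_{\phi_i}$ maps to $(\mu_{\hat\phi_i}, \hat\phi_i)$, where $\hat\phi_i$ is the reflection of $P_k$ through the midpoint of the edge $e_i$. These midpoint reflections generate a subgroup of $\Gamma(P_k) \simeq D_{2k}$ that, paired with the corresponding moves in $\rubg^-(P_k)$, furnishes the image of the embedding.

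The heart of the argument is to exhibit a copy of $C_k$ inside $\rubg(\mathleftmoon)$ splitting off as a direct factor. For each $\rho \in \Gamma^+(P_k)$, the corresponding lift $\tilde{\rho} \in \Gamma^+(\mathleftmoon)$ from Equation~\ref{suspiso} fixes both $V_1$ and $V_2$ and implements a rotation of the hosohedron about the $V_1 V_2$-axis. I would show that each such $\tilde{\rho}$ can be realized inside $\rubg(\mathleftmoon)$ as a product of moves whose restriction to $\rub(P_k)$ is trivial---analogous to the commutator constructions familiar from the classical Rubik's cube---yielding an injection $C_k \hookrightarrow \rubg(\mathleftmoon)$. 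Since $\tilde{\rho}$ preserves each facet setwise and acts compatibly on its subfaces, this $C_k$ commutes with the generating moves, giving a direct-product decomposition $\rubg^-(P_k) \times C_k \hookrightarrow \rubg(\mathleftmoon)$. A cardinality count against the wreath-product bound from Section~\ref{wrrep}, constrained by Proposition~\ref{wrprop}, then promotes this injection to the desired isomorphism.

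The main obstacle is the middle step: the explicit realization of each $\tilde\rho$ as a word in the $\mu_{\phi_i}$'s whose first projection is trivial requires a careful choice of word that cancels in the $\rubg^-(P_k)$-component while producing the desired cyclic permutation of the two vertex-colors, together with the verification that the resulting $C_k$-subgroup genuinely commutes with the moves rather than giving only a semidirect product.
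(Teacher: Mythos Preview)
Your overall framework---apply Proposition~\ref{suspemb}, then isolate a copy of $C_k$ inside the image---matches the paper's, but the argument breaks at the step where you claim that the lifted rotations $\tilde\rho$ commute with the generating moves.

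The justification you give, that ``$\tilde\rho$ preserves each facet setwise'', is false. The facets of $\mathleftmoon$ are (in bijection with) the edges of $P_k$, and a nontrivial rotation $\rho\in\Gamma^+(P_k)$ permutes these cyclically; its lift $\tilde\rho$ does the same. Concretely, an element of $\rubg(\mathleftmoon)$ projecting to $(1,\rho)\in\rubg^-(P_k)\times D_{2k}$ does \emph{not} commute with a move $\mu_{\phi_i}$, whose second projection is a reflection: their commutator projects to $(1,\rho^{-2})$. So the ``direct factor via commutation'' step fails, and with it your cardinality argument, since the bound coming from Proposition~\ref{wrprop} alone is not tight enough to force the answer.

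What you are missing is the paper's key reduction. Each generating move projects simultaneously to a transposition in $\perm_k\hookrightarrow\rubg^-(P_k)$ and to a reflection in $D_{2k}$, so throughout the image the $C_2$-component of $D_{2k}$ coincides with the sign of the $\perm_k$-permutation. This redundancy lets one replace the codomain $\rubg^-(P_k)\times D_{2k}$ by the honest direct product $\rubg^-(P_k)\times C_k$, and after that there is nothing to prove about commutation. It remains only to show that this embedding is surjective, which the paper does by exhibiting explicit words in the $\mu_{\phi_i}$: for even $k$ a product of $k$ consecutive moves giving a $(k{-}1)$-cycle in $\perm_k$ with trivial $C_k$-component, and for odd $k$ the square of the word from Equation~\eqref{polygmovecyc}, which is trivial in $\rubg^-(P_k)$ but generates $C_k$.
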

\begin{proof}
By Proposition \ref{suspemb} there is an embedding $\rubg(\mathleftmoon) \hookrightarrow \rubg^-(P_k) \times D_{2k}$, where $D_{2k}=  C_2  \ltimes C_k \simeq \Gamma(P_k)$ denotes the dihedral group, with the projection to $D_{2k}$ being surjective. Note that non-trivial moves in $\rubg(\mathleftmoon)$ correspond to non-rotational automorphisms in $D_{2k}$ and to transpositions in $\perm_k \hookrightarrow \rubgm(P_k)$ (see Proposition \ref{rubpoly} for the latter embedding). Therefore, for an arbitrary element in $\rubg(\mathleftmoon)$ the sign of the permutation in $\perm_k$ is equal to the element in $C_2 \hookrightarrow D_{2k}$. This means that we can replace $D_{2k}$ with $C_k$ and obtain an embedding $\rubg(\mathleftmoon) \hookrightarrow \rubg^-(P_k) \times C_k$. In what follows, we will adhere to the convention from the proof of Proposition \ref{rubpoly} and denote by $F_1, \cdots, F_k$ and $G_1, \cdots, G_k$ the vertices and edges of $P_k$ in their cyclic order. These correspond to the edges and facets of $\mathleftmoon$ as well. We will additionally denote by $\phi_i \in \Gamma^+(\mathleftmoon, G_i)$ the only non-trivial rotation fixing $G_i$ and by $\hat{\phi}_i \in \Gamma(P_k, G_i)$ its restriction to $P_k$. We now distinguish the two cases.    

\textit{Case 1}: $k$ is even. Given $i$, consider the element of $\rubgm(\mathleftmoon)$ defined by the following product of $k$ moves: 
\begin{equation}
 \mu =   \mu_{\phi_{k+i-1}} \cdots \mu_{\phi_{i+1}} \mu_{\phi_i}. 
\end{equation}
If $\sigma \in \perm_k$ is the corresponding  element via the embedding, an immediate computation yields $\sigma = ( i -1 \ \ i-2  \ \cdots \   i-k +1 )$, where all the entries are modulo $k$. Moreover, the element in $C_k$ associated $\mu$ is trivial since it can be checked that $\hat{\phi}_k \cdots \hat{\phi}_1 = 1$ in $D_{2k}$ for an even $k$. As $i$ varies, the cycles $\sigma$ generate the alternating group $\alt_k$ and since the projection $\rubg(\mathleftmoon) \rightarrow D_{2k}$ is surjective it follows that $\rubg(\mathleftmoon) \simeq \perm_k \times C_k$, as desired. 

\textit{Case 2}: $k$ is odd. Given $i$, consider the element $\mu^2 \in \rubgm(\mathleftmoon)$ where $\mu$ is defined by Equation \ref{polygmovecyc}. It follows that the corresponding element in $\rubg(P_k)$ via the embedding is trivial. A relation in $D_{2k}$ similar to the one mentioned in Case 1 shows that the corresponding element in $C_k \hookrightarrow D_{2k}$ is a generator. Since we know that the projection $\rubg(\mathleftmoon) \rightarrow \rubg(P_k)$ is surjective, it follows that $\rubg(\mathleftmoon) \simeq \rubg(P_k) \times C_k$, as desired. 
\end{proof}

\subsection{Extending Moves from Facets}\label{descent}
In this section, we describe a procedure to extend moves of the Rubik's group of a facet of a polytope to moves of the Rubik's group of the whole polytope. Since facets have rank $n-1$, this can be exploited for arguments and constructions that are inductive with respect to the dimension $n$. To this end, consider a facet $F$ of $\pos$ and a facet $G$ of $F / F_{-1}$, which is a ridge (i.e., of rank $n-2$) when seen as a face $\pos$. By the diamond property there is a unique facet $F' \not = F$ of $\pos$ such that $G \prec F' \prec F_n$. Now, a rotation $\phi \in \Gamma^+(G/ F_{-1})$ induces a rotation $\phi' \in \Gamma^+(F' / F_{-1}, G) \simeq \Gamma^+(G / F_{-1})$.  We extend the move $\mu_\phi \in \rubg(F / F_{-1})$ to the move $\mu_{\phi'} \in \rubg(\pos)$. Note that inclusion gives a natural embedding $\rub(F / F_{-1}) \hookrightarrow \rub(\pos)$. With an additional technical assumption, the following result guarantees that the move extension is coherent with the embedding. 


\begin{prop}\label{extendprop}
Suppose that any two facets of $\pos$ have at most one incident ridge and pick $F, G, F'$ and $\phi$ as above. Then the move $\mu_{\phi'}$, seen as a permutation of $\rub(\pos)$, restricts on $\rub(F / F_{-1})$ to $\mu_\phi$. In other words, the following diagram commutes: 
\begin{center}
\begin{tikzcd}
	{\rub(\pos)} & {\rub(\pos)} \\
	{\rub(F / F_{-1})} & {\rub(F / F_{-1})}
	\arrow[hook, from=2-1, to=1-1]
	\arrow[hook, from=2-2, to=1-2]
	\arrow["{\mu_{\phi'}}", from=1-1, to=1-2]
	\arrow["{\mu_\phi}", from=2-1, to=2-2]
\end{tikzcd}
\end{center}
\end{prop}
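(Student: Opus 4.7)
The plan is to lift $\phi$ and $\phi'$ to a single global automorphism $\Phi \in \Gamma(\pos)$, and then verify the restriction equality by a case analysis on whether the location face $A$ of a pair $(A,B) \in \rub(F/F_{-1})$ is incident to $G$.

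For the common lift, I would invoke the identifications at the end of Section \ref{intro}, namely $\Gamma(\pos, G) \simeq \Gamma(G/F_{-1}) \times C_2$ together with $\Gamma(\pos, G) \cap \Gamma(\pos, F) = \Gamma(\pos, G) \cap \Gamma(\pos, F')$. The rotation $\phi \in \Gamma^+(G/F_{-1})$ therefore determines a unique $\Phi \in \Gamma(\pos)$ that simultaneously stabilizes $F$, $F'$, and $G$ and that restricts to $\phi$ on $G/F_{-1}$. By uniqueness, $\Phi$ is the global representative both of $\phi$ under $\Gamma^+(\pos, F) \simeq \Gamma^+(F/F_{-1})$ (the isomorphism used in Definition \ref{moves} to act by $\mu_\phi$ on $\rub(F/F_{-1})$) and of $\phi'$ under $\Gamma^+(\pos, F') \simeq \Gamma^+(F'/F_{-1})$ (used to act by $\mu_{\phi'}$ on $\rub(\pos)$), so both moves act through restrictions of the same element.

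Given $(A,B) \in \rub(F/F_{-1})$, I then unwind Equation \ref{defmoves}. If $A \preceq G$, the incidence $A \preceq F'$ follows from $G \preceq F'$, so both moves return $(\Phi(A), \Phi(B))$; the output lies in $\rub(F/F_{-1})$ because $\Phi$ stabilizes $F$. If $A \not\preceq G$, then $\mu_\phi(A,B) = (A,B)$ by the else-branch, and I must force the same for $\mu_{\phi'}$, equivalently show $A \not\preceq F'$. This is the contrapositive of the structural claim that $A \preceq F$ and $A \preceq F'$ imply $A \preceq G$, i.e.\ that the set of common lower bounds of $F$ and $F'$ equals $G/F_{-1}$.

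This structural claim is the heart of the argument and the place where the hypothesis is decisive. I would prove it by passing to the regular sub-polytope $F_n/A$, in which $F$ and $F'$ are both facets and ridges are exactly ridges of $\pos$ incident to $A$; the at-most-one-ridge hypothesis on $\pos$ is inherited. Strong flag-connectedness of $F_n/A$ links $F$ to $F'$ by a chain of facets whose consecutive members share ridges of $F_n/A$, and the hypothesis collapses this chain so as to exhibit a direct common ridge between $F$ and $F'$ in $F_n/A$, which by uniqueness in $\pos$ must coincide with $G$, whence $A \preceq G$. I expect this chain-collapse — showing that in $F_n/A$ the two facets $F,F'$ are actually adjacent — to be the main technical obstacle. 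The hypothesis is indispensable here, as ditopes (Example \ref{dihedra}), in which distinct facets share every ridge, already witness the failure of the conclusion when it is removed.
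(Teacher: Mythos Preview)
Your proposal follows essentially the same route as the paper. Both arguments (i) lift $\phi$ and $\phi'$ to a single global automorphism $\Phi\in\Gamma(\pos)$ fixing $F,F',G$ via the identifications $\Gamma(\pos,G)\simeq\Gamma(G/F_{-1})\times C_2$ and $\Gamma(\pos,G)\cap\Gamma(\pos,F)=\Gamma(\pos,G)\cap\Gamma(\pos,F')$ from the end of Section~\ref{intro}, and (ii) split on whether the location $A$ satisfies $A\preceq G$, reducing the second case to the structural claim that any face incident to both $F$ and $F'$ is incident to $G$.

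The only difference is in how that structural claim is handled. The paper asserts it in one sentence as an immediate consequence of the uniqueness of $G$; you instead isolate it as the ``heart of the argument'' and sketch a proof by passing to $F_n/A$ and invoking strong flag-connectedness. That extra scrutiny is well placed. Note, however, that your proposed ``chain-collapse'' is not obviously forced by the hypothesis as stated: knowing that any two facets of $\pos$ share at most one ridge does not by itself shorten a chain $F=F^0,\dots,F^m=F'$ of facets of $F_n/A$ with consecutive shared ridges down to $m=1$. You would still need to argue that two facets of $F_n/A$ which share a ridge in the ambient $\pos$ must already share one in $F_n/A$; this is the genuine content, and it is precisely what the paper's one-line assertion is silently absorbing. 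So your proof is at least as rigorous as the paper's on this point, but the step you flag as the main obstacle really is one, and your outline does not yet close it.
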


\begin{proof}
 Since by hypothesis $G$ is the only ridge of $\pos$ incident to both $F$ and $F'$, every face incident to both $F$ and $F'$ has to be incident to $G$ as well. It then follows from Definition \ref{moves} that a pair $(H,K) \in \rub(\pos)$ with location $H \preceq F$ is not fixed by $\mu_{\phi '}$ only if $H \preceq G$. Moreover, the extension of $\phi'$ to $\Gamma^+(\pos, F')$ fixes $F$ as well by the discussion at the end of Section \ref{intro} and its restriction to $F / F_{-1}$ coincides with the extension of $\phi$. Therefore, if $H \preceq F$ then $\mu_{\phi'}$ has the same effect as $\mu_\phi$ on $(H, K)$ when the latter is seen as an element of $\rub(\pos)$ and of $\rub(F / F_{-1})$, as desired.   
\end{proof}

An example of a polytope not satisfying the hypothesis of Proposition \ref{extendprop} is the $2$-gon (also known as digon) i.e., the polygon with two vertices.


\section{The Rubik's Simplex}\label{simpsec}
In this section, we study the Rubik's group for a classical regular polytope in arbitrary dimension: the \emph{simplex}. We start by recalling the definition of the latter. 
\begin{defn}
For $n \geq 0$, the $n$-dimensional \emph{simplex} $\simp^n$ is the abstract polytope whose faces are the subsets of $\{0, \cdots, n\}$ ordered by inclusion. 
\end{defn}
For a face $F \in \simp^n$ of cardinality $r+1$, its rank is given by $r$. The simplex has thus $\binom{n + 1}{r + 1}$ faces of rank $r$, where $\binom{\cdot}{\cdot}$ denotes the binomial coefficient. There are natural isomorphisms $F / F_{-1} \simeq \simp^r$ and $F_n / F \simeq \simp^{n-r - 1}$ given by restricting subsets. The group of automorphisms of the simplex can easily be characterized as the symmetric group $\Gamma(\simp^n) \simeq \perm_{n+1}$, naturally acting on $\simp^n$ by permuting subsets. In particular, the simplex is a regular polytope. By choosing the flag $\emptyset \prec \{ 0\}  \prec \{ 0,1 \} \prec \cdots \prec \{0, \cdots ,n \}$, the automorphism $\rho_j$ from Section \ref{intro} corresponds to the transposition $(j \ \ j+1)$ and thus the rotation subgroup is isomorphic to the alternating group $\Gamma^+(\simp^n) \simeq \alt_{n+1}$. For the rest of the section, we assume $n \geq 3$. \\

\begin{figure}[h!]
\begin{center}
\begin{tikzpicture}
\node[anchor=south west,inner sep=0] at (0,0) {\includegraphics[width=.9\textwidth]{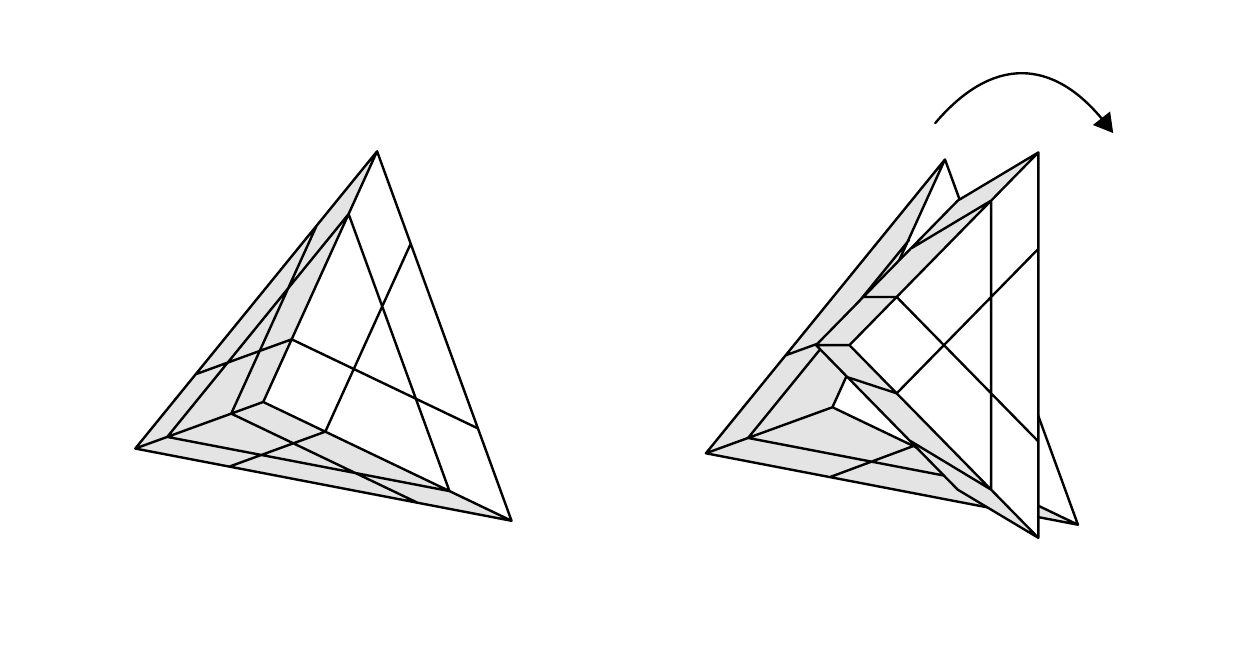}};
\end{tikzpicture}
\vspace{-.5cm}
\caption{A Rubik's tetrahedron $\rub(\simp^3)$ (left) together with a move in $\rubg(\simp^3)$ (right).}
\end{center}
\end{figure}

Fix a system of coordinates on $\simp^n$. By the results of Section \ref{wrrep}, $\rubg(\simp^n)$ embeds into
\begin{equation}\label{wrsimp}
\left( \alt_{n} \wr \perm_{n+1} \right) \times \prod_{1 \leq i < n-1} \perm_{n-i} \wr \perm_{n_i}
\end{equation}
where $n_i = \binom{n+1}{i+1}$. We denote a typical element of Equation \ref{wrsimp} by $(\tau_i, \sigma_i)_{0 \leq i \leq n-2}$ where: 
\begin{itemize}
\item $\tau_0 \in \alt_n^{n+1}$.
\item $\tau_i \in \perm_{n-i}^{n_i}$ for $i > 0$.
\item $\sigma_i \in \perm_{n_i}$ for all $i$. 
\end{itemize}
Moreover, we denote the components of each $\tau_i$ by $\tau_i^j \in \perm_{n-i}$, $j=1, \cdots, n_i$. For the Rubik's simplex, a stronger version of the results from Section \ref{descent} holds. Namely, each face $H \in \simp^n$ of rank $r$ induces a group embedding 
\begin{equation}\label{embsimp}
\rubg(\simp^ r) \simeq \rubg(H/H_{-1}) \hookrightarrow \rubg(\simp^n)
\end{equation}
given by extending a permutation $\mu \in \rubg(F / F_{-1})$ to $\bar{\mu} \in \rubg(\simp^n)$ as $\bar{\mu}(F,G) = (\mu( F \cap H) \cup (F  \setminus H), \  \mu( G \cap H) \cup (G  \setminus H)) $. For $r=n-1$ this embedding maps moves as described in Section \ref{descent}. 

\subsection{Fundamental Result for Rubik's Simplices}\label{secfundsimp}

The main result of this section is the following complete characterization of the group of the Rubik's simplex.

\begin{prop}\label{simpmain}
The Rubik's group of the simplex $\rubg(\simp^n)$ consists exactly of the elements $(\tau_i, \sigma_i)_{0 \leq i \leq n-2}$ satisfying: 
\begin{itemize}
\item $\prod_{1 \leq j \leq n+1} \tau_0^j \in \alt_n'$.
\item $\prod_{1 \leq j \leq n_i} \tau_i^j \in \alt_{n-i}$ for $i > 0$.
\item $\sigma_i \in \alt_{n_i}$ for all $i$. 
\end{itemize}
\end{prop}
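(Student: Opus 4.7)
My plan is to prove both inclusions: that every element of $\rubg(\simp^n)$ satisfies the three conditions, and conversely that every element of Equation \ref{wrsimp} satisfying them lies in $\rubg(\simp^n)$.

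The forward inclusion follows from Proposition \ref{wrprop} together with one elementary parity observation. Applied to each factor of Equation \ref{wrsimp}, Proposition \ref{wrprop} forces the image of $\rubg(\simp^n)$ into the kernel of the canonical character. For $i=0$, this character lands in $\alt_n / \alt_n'$, giving the first condition directly. For $i > 0$, the abelianization of $\perm_{n-i}$ is $C_2$ via the sign homomorphism, so the product of the $\tau_i^j$ must have even sign, hence lies in $\alt_{n-i}$. The parity condition $\sigma_i \in \alt_{n_i}$ requires a separate but short argument: since it defines a subgroup of $\perm_{n_i}$, it suffices to verify it on the generating moves $\mu_\phi$ with $\phi \in \alt_n$, and since $\alt_n$ is generated by $3$-cycles, I would reduce to observing that a $3$-cycle on the $n$ vertices of a facet induces a product of disjoint $3$-cycles on $(i+1)$-subsets (by case-analysis on the four possible intersection sizes of a subset with the support of the $3$-cycle), hence an even permutation.

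For the reverse inclusion I proceed by induction on $n \geq 3$. The base case is the tetrahedron, verified by explicit enumeration of moves and their effects on the four vertices and six edges. For the inductive step, assuming the statement for $\simp^{n-1}$, I exploit the embeddings $\rubg(\simp^{n-1}) \simeq \rubg(H/H_{-1}) \hookrightarrow \rubg(\simp^n)$ from Equation \ref{embsimp}, one for each of the $n+1$ facets $H$; by induction each such subgroup realizes the analogue of the claimed conditions on $\simp^{n-1}$. The argument then splits into two parts. First, I show that by combining elements arising from different facets, the positional components can realize any tuple $(\sigma_i) \in \prod_i \alt_{n_i}$: for vertices, the $\alt_n$ actions coming from distinct facets already generate $\alt_{n+1}$ on the full vertex set, and analogous transitivity arguments on $(i+1)$-subsets handle $i > 0$. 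Second, I show that with the positional components reduced to the identity, any twist tuple $(\tau_i)$ satisfying the product constraints can be realized by building localized twists as commutators of moves supported on overlapping facets.

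The main obstacle I anticipate is this twist-generation step: producing elements with trivial positional component but with prescribed twists requires constructing commutators that isolate the twist effect without leaving positional residue, and then verifying that such localized twists span \emph{exactly} the kernel of the product constraint rather than a proper subgroup of it (the condition on $\tau_0$ involving $\alt_n'$ is especially delicate in low dimensions, where $\alt_n$ is not perfect). The base case $n = 3$ is a secondary obstacle, since the constraint $\prod \tau_0^j = 1$ (forced because $\alt_3' = 1$) is genuinely restrictive and the tetrahedron admits no facet larger than a triangle, so one cannot appeal to any sub-induction and must reason directly from the twelve generating moves.
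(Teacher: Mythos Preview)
Your overall strategy matches the paper's: forward inclusion via Proposition~\ref{wrprop} plus a parity count on the $\sigma_i$, backward inclusion by induction on $n$ with base $n=3$, exploiting the facet embeddings of Equation~\ref{embsimp} and separating positional from twist components, with commutators to localize effects.

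There are, however, three technical steps the paper isolates that your outline does not anticipate. First, ridges (rank $n-2$) fall outside the reach of the inductive hypothesis: a ridge of $\simp^n$ contained in a facet $H$ is a \emph{facet} of $H/F_{-1}\simeq\simp^{n-1}$, and facets-as-locations are fixed by every element of $\rubg(\simp^{n-1})$, so the $(n-1)$-hypothesis gives no control over $\sigma_{n-2}$ or $\tau_{n-2}$. The paper handles the ridge factor by a direct reduction to a rank-$3$ subface and the tetrahedron base case (Lemma~\ref{sovleridge}) rather than via the $(n-1)$-step. Second, before one can invoke the inductive hypothesis at all, the three (resp.\ two) relevant locations must first be brought inside a single common facet; this requires explicit preparatory moves (Lemmas~\ref{facetlemm} and~\ref{bringcolors}) that ``combining elements from different facets'' does not by itself supply. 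Third, the commutator trick is already needed for the \emph{positional} part, not just for twists: lifting an element of $\rubg(H/F_{-1})$ that $3$-cycles three rank-$r$ faces to $\rubg(\simp^n)$ simultaneously $3$-cycles the rank-$(r+1)$ faces $F_i\cup\{\alpha\}$, and this residue at the adjacent rank must be cancelled by commuting against a facet rotation (Lemma~\ref{solveperm}). None of these is fatal to your plan, but each is a genuine obstacle your sketch leaves unaddressed.
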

Before proving the result, recall that the commutator subgroup $\alt_n'$ is trivial for $n=3$, is the Klein group for $n=4$ and coincides with the whole $\alt_n$ for $n>4$, in which case the first condition is vacuous.   

The proof of Proposition \ref{simpmain} will be divided in several steps organized in a series of lemmas. We begin by showing that the conditions are indeed satisfied by the elements in the embedding. 

\begin{lemm}\label{simpfirstlemm}
The elements in $\rubg(\simp^n)$ satisfy the conditions in Proposition \ref{simpmain}.
\end{lemm}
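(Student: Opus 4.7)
The plan is to check the three conditions separately. The first two, concerning the $\tau_i$ components, will follow from Proposition \ref{wrprop} applied to the simplex; the third, concerning the permutation components $\sigma_i$, is not implied by Proposition \ref{wrprop} and needs a separate direct argument on the generating moves.

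For conditions one and two, I would simply unwrap Proposition \ref{wrprop} in the special case $\pos = \simp^n$. Using the natural identifications $\mathcal{A}_0 \simeq \simp^{n-1}$ and $\mathcal{A}_i \simeq \simp^{n-i-1}$, the $i$-th direct factor of Equation \ref{wrsimp} is $\alt_n \wr \perm_{n+1}$ for $i=0$ and $\perm_{n-i} \wr \perm_{n_i}$ for $i \geq 1$. The canonical character lands in the abelianization of the inner group, and its kernel yields each of the two claims. Since the abelianization of $\alt_n$ is $\alt_n / \alt_n'$, the kernel condition for $i=0$ reads $\prod_j \tau_0^j \in \alt_n'$. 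Since the abelianization of $\perm_{n-i}$ is $\perm_{n-i} / \alt_{n-i} \simeq C_2$ (or trivial when $n-i=1$), the kernel condition for $i \geq 1$ reads $\prod_j \tau_i^j \in \alt_{n-i}$, matching the statement.

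For the third condition, since $\alt_{n_i} \subseteq \perm_{n_i}$ is a subgroup it suffices to check it on the generating moves of $\rubg(\simp^n)$. A generator $\mu_\phi$ is attached to a rotation $\phi \in \Gamma^+(H/F_{-1}) \simeq \alt_n$ of a facet $H$; by Equation \ref{defmoves} it fixes every rank-$i$ face not contained in $H$ and acts on the $\binom{n}{i+1}$ rank-$i$ subsets of the $n$-element set $H$ via the natural action of $\phi$ on $(i+1)$-subsets. The question therefore reduces to whether the induced homomorphism $\alt_n \to \perm_{\binom{n}{i+1}}$ has image in the alternating group. Since $\alt_n$ is generated by $3$-cycles, it suffices to observe that a $3$-cycle $(a\,b\,c)$ acts on $(i+1)$-subsets $S$ as a disjoint union of fixed points (when $|S \cap \{a,b,c\}| \in \{0,3\}$) and $3$-cycles (when $|S \cap \{a,b,c\}| \in \{1,2\}$), and a product of $3$-cycles is even. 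The main subtlety in the whole argument is exactly this third condition, as Proposition \ref{wrprop} constrains only the $\tau$-components; fortunately the reduction to $3$-cycles makes the parity check clean and elementary.
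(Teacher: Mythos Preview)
Your proof is correct. The treatment of the first two conditions is identical to the paper's: both simply invoke Proposition~\ref{wrprop} and identify the relevant abelianizations $\alt_n/\alt_n'$ and $\perm_{n-i}/\alt_{n-i}$.

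For the third condition the two arguments diverge slightly. You generate $\alt_n$ by $3$-cycles and observe directly that a $3$-cycle $(a\,b\,c)$ acts on $(i{+}1)$-subsets as a disjoint union of fixed points and $3$-cycles, hence evenly. The paper instead works with transpositions: it shows that any transposition in $\perm_k$ induces a product of exactly $\binom{k-2}{h-1}$ transpositions on $h$-subsets, a number independent of the chosen transposition, so an even word in transpositions induces an even permutation regardless of whether $\binom{k-2}{h-1}$ itself is even or odd. Your route is a bit more direct (no parity bookkeeping is needed since $3$-cycles are automatically even); the paper's route has the mild advantage of computing the exact parity contribution of a single transposition, which is the kind of count reused later for the hypercube (Lemma~\ref{cubefirstlemm}). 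Either way, both arguments are elementary and equally valid here.
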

\begin{proof}
The first two conditions are a direct consequence of Proposition \ref{wrprop} since $\perm_n' = \alt_n$. Regarding the third condition, we will show the following stronger statement: an even permutation in $\alt_k$ for some $k$ induces an even permutation of subsets of $\{1, \cdots, k \}$ of fixed cardinality $h < k$. From here, it is immediate that all the moves (see Definition \ref{moves}) induce even $\sigma_i$'s. In order to deduce the statement, consider a transposition $(i  \  j) \in \perm_k$. Its induced permutation on subsets fixes the ones that contain both $i$ and $j$ and the ones that contain neither of them. Moreover, it induces a transposition on the subsets that contain $i$ but not $j$ by replacing $i$ with $j$. Thus, the induced permutation is a product of $\binom{k-2}{h-1}$ transpositions. This number is independent of $i$ and $j$, meaning that an even permutation in $\alt_k$ induces a product of an even number of permutations that are either all even or all odd. In either case, the induced permutation is even, as desired.
\end{proof}

What is left to show is the converse of the lemma above. This involves factorizing any element as in Proposition \ref{simpmain} into a product of moves in the sense of Definition \ref{moves}. We will construct such a factorization by induction on the dimension $n$. In other words, we will describe a recursive algorithm for `solving' the puzzle represented by the $n$-dimensional Rubik's simplex. Our induction strategy is based on reducing the problems to facets of $\simp^n$, which are $(n-1)$-dimensional simplices, and then relying on the extension of moves from Section \ref{descent} to conclude. One subtlety is that  elements in $\rub(\simp^n)$ whose location is a ridge are unaffected by moves extended from facets, and they have to be addressed separately in the proof. We start by proving the base of the induction.

\begin{lemm}\label{basesimp}
Proposition \ref{simpmain} is true for $n=3$ i.e., for the tetrahedron.
\end{lemm}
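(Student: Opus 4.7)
By Lemma \ref{simpfirstlemm}, the image of $\rubg(\simp^3)$ in the wreath-product embedding of Equation \ref{wrsimp} is contained in the subgroup $S \subseteq (\alt_3 \wr \perm_4) \times (\perm_2 \wr \perm_6)$ defined by the three conditions of Proposition \ref{simpmain}. For $n=3$ these conditions simplify: since $\alt_3$ is abelian, the first collapses to $\prod_j \tau_0^j = 1$ in $\alt_3 \simeq C_3$; since $\alt_2$ is trivial, the second becomes $\prod_j \tau_1^j = 1$ in $\perm_2 \simeq C_2$; and the third asks for $\sigma_0 \in \alt_4$ and $\sigma_1 \in \alt_6$ independently. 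The plan is to establish the reverse inclusion by explicit construction from the eight generating moves associated to the non-trivial rotations of the four triangular facets.

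I cannot invoke Proposition \ref{extendprop} at the base case, as facets of $\simp^3$ are triangles whose Rubik's group is trivial. Instead I would follow the classical strategy used for solving the physical Rubik's cube, exhibiting four families of elements of $\rubg(\simp^3)$ whose union generates $S$: (i) pure $3$-cycles on vertex positions, covering $\alt_4$; (ii) pure $3$-cycles on edge positions, covering $\alt_6$; (iii) pure vertex twists $\tau_0 \in \alt_3^4$ subject to $\prod_j \tau_0^j = 1$; (iv) pure edge flips $\tau_1 \in \perm_2^6$ subject to $\prod_j \tau_1^j = 1$. A single generating move $\mu_\phi$ simultaneously cycles three vertices and three of the six edges and non-trivially permutes their colors, so none of these families is obtained from a single move. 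The standard device is the commutator $[\mu_\phi, \mu_\psi]$ of rotations at two facets sharing a ridge: the portions acting outside the shared ridge cancel, leaving a localized disturbance. For (i) one already gets $\sigma_0$ a $3$-cycle from a single facet rotation, and the four such $3$-cycles generate $\alt_4$; for (ii) one checks that the four edge $3$-cycles arising from the facet rotations generate $\alt_6$ (e.g.\ by composing two of them to produce a $5$-cycle); for (iii) and (iv) one expresses the elementary twists and flips as suitable commutators of facet rotations.

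The main obstacle is the bookkeeping: with fifty pieces in $\rub(\simp^3)$ and eight generating moves, tracking the precise effect of several commutators through a chosen system of coordinates is tedious but routine. Once the four families are exhibited, their concatenations saturate the containment from Lemma \ref{simpfirstlemm}, giving $\rubg(\simp^3) = S$. As a sanity check, one obtains
\[
|S| = |\alt_4| \cdot |\alt_6| \cdot 3^{3} \cdot 2^{5} = 12 \cdot 360 \cdot 27 \cdot 32 = 3{,}732{,}480,
\]
which is the order one should expect for the group of the Rubik's tetrahedron.
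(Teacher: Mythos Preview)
Your approach is quite different from the paper's, which does not attempt a constructive argument at all: the paper simply records that the $n=3$ case is classical (citing Joyner) and notes that with $|\rubg(\simp^3)| = 3^4 \cdot 4! \cdot 2^6 \cdot 6!/24 = 3{,}732{,}480$ the equality can be verified by exhaustive computer search. So you are doing considerably more than the paper does here, and your order count and identification of the constraints for $n=3$ are correct.

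That said, your sketch has an internal inconsistency. You announce that you will exhibit families (i)--(iv) of \emph{pure} elements---e.g.\ $3$-cycles acting only on vertex positions and fixing everything else---and then, for (i), you say ``one already gets $\sigma_0$ a $3$-cycle from a single facet rotation.'' A single facet rotation is not pure: it simultaneously $3$-cycles three vertices, $3$-cycles three edges, and alters orientations. So it does not belong to family (i) as you defined it. What your observation actually shows is that the \emph{projection} of $\rubg(\simp^3)$ onto the $\sigma_0$-component hits $\alt_4$; likewise for (ii) onto $\alt_6$. But surjectivity of each projection separately does not give surjectivity onto $\alt_4 \times \alt_6$---you must rule out a hidden correlation between $\sigma_0$ and $\sigma_1$---and it says nothing about the $\tau$-kernel. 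To make your outline work you must either (a) genuinely produce pure vertex $3$-cycles and pure edge $3$-cycles via specific commutators (not just assert that commutators localize), or (b) restructure the argument as ``projection onto $\alt_4\times\alt_6$ is surjective'' followed by ``the kernel contains all admissible $(\tau_0,\tau_1)$,'' and supply the missing surjectivity and kernel computations explicitly. Either route is feasible for the tetrahedron, but as written the proof does not close.
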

\begin{proof}
This is a well-known fact \cite{joyner2008adventures} and be even checked by exhaustion in a computer-assisted manner since $|\rubg(\simp^3)| = 3^4 \cdot 4! \cdot 2^6 \cdot 6! \ / \   24 = 3732480$. 
\end{proof} 

We now prove a technical result enabling the reduction to facets, which will be the key step of the inductive argument. 

\begin{lemm}\label{facetlemm}
Pick three distinct elements $(F_i,G_i) \in \rub(\simp^n)$, $i=1,2,3$, such that the locations $F_i$ have the same rank $ r \leq n -2$. Then there exists $\mu \in \rubg(\simp^n)$ such that the locations of $\mu(F_i, G_i)$, $i=1,2,3$, are all incident to a common facet of $\simp^n$. 
\end{lemm}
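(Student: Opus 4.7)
Denote the missing set $M_i := \{0,\dots,n\}\setminus F_i$; since $r\le n-2$ we have $|M_i|\ge 2$, and $F_1,F_2,F_3$ share a common facet precisely when $M_1\cap M_2\cap M_3\ne\emptyset$. My plan is to find a sequence of moves whose composite $\mu\in\rubg(\simp^n)$ makes this triple intersection nonempty. If it already is, I take $\mu=1$.

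The key tool will be the effect of a single move on the missing sets: rotating the facet $H$ missing vertex $v$ by $\phi\in\alt_n$ sends $M_i\mapsto\{v\}\cup\phi(M_i\setminus\{v\})$ when $v\in M_i$, and leaves $M_i$ fixed otherwise. Using the transitivity of $\alt_n$ on $H$, a single move will suffice in either of two favorable scenarios: \textbf{(a)} some $v$ lies in exactly one $M_i$, say $M_1$, and $M_2\cap M_3\ne\emptyset$, so $\phi$ can be chosen to steer an element of $M_1\setminus\{v\}$ into $M_2\cap M_3$; \textbf{(b)} some $v$ lies in exactly two of the $M_i$'s, say $M_1,M_2$, with $|M_1\cap M_2|\ge 2$, so $\phi$ can be chosen so that $\phi((M_1\cap M_2)\setminus\{v\})$ meets $M_3$. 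Either scenario produces a common vertex in one move.

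If no $v$ admits scenario (a) or (b), the configuration must fall into one of two residual types: the three $M_i$'s are pairwise disjoint (forcing $3(n-r)\le n+1$, hence $r\ge(2n-1)/3$ and $n\ge 5$), or the three pairwise intersections are distinct singletons $\{v_{12}\},\{v_{13}\},\{v_{23}\}$ with each $M_i=\{v_{ij},v_{ik}\}$, which forces $|M_i|=2$, i.e.\ $r=n-2$. In either residual type I would use two moves: a preparatory move first modifies one of the $M_i$'s to break the residual structure (creating either a genuine pairwise overlap of size $\ge 2$, or a vertex belonging to only one $M_i$ while the other two intersect), and a second move then applies scenario (a) or (b) to the new configuration. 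The multi-transitivity of $\alt_n$ together with the available slack (from $|M_i|\ge 2$ in the singleton-intersection type, and from $n\ge 5$ in the pairwise-disjoint type) ensures the preparatory $\phi$ has enough freedom.

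The main obstacle will be the analysis of these residual cases: a single move provably cannot suffice, and one must verify that the preparatory $\phi$ can be chosen to both break the obstruction and avoid reintroducing an analogous one. The requisite elementary counts are tight for the smallest admissible cases (e.g.\ $n=3$ with $r=1$, corresponding to a claw-at-a-vertex configuration, or $n=5$ with $r=3$, a perfect matching of missing pairs) but always work out for $r\le n-2$ and $n\ge 3$.
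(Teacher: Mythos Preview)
Your proposal is correct but takes a genuinely different route from the paper's proof. The paper works directly with the locations $F_i$: it first picks $\alpha\in F_1\setminus F_2$, rotates the facet missing $\alpha$ (which fixes $F_1$) so that $F_2'$ differs from $F_1$ in a single element $\beta$, and then in one further move (Case~1) or two further moves (Case~2) pushes $F_3'$ into $F_1\cup\{\beta\}$, which has rank $r+1<n$ and hence lies in a facet. You instead dualize to the complements $M_i$ and classify configurations by their intersection pattern: two ``favorable'' scenarios cover every configuration except the pairwise-disjoint one and the triangle-of-singletons one, each of which is broken by a single preparatory move. Your case analysis is correct (the exclusion of the intermediate patterns --- one or two nonempty pairwise intersections --- follows exactly because $|M_i|\ge 2$ forces a vertex admitting scenario~(a)), and the residual verifications do go through with the transitivity available in $\alt_n$; the tight instances you flag ($n=3$, $r=1$ and $n=5$, $r=3$) indeed work. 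What your approach buys is a cleaner structural picture and a sharper bound (at most two moves versus the paper's two or three); what the paper's approach buys is that it sidesteps the residual classification entirely by always forcing $F_1,F_2$ to near-coincide first, making the argument shorter if less symmetric.
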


\begin{proof}
Since $F_1 \not = F_2$ there exists $\alpha \in \{0, \cdots, n\}$ such that $\alpha \in F_1 \setminus F_2$. Consider the facet $H_1 = \{ 0, \cdots, n \} \setminus \{ \alpha \}$. It is straightforward to find $\phi_1 \in \Gamma^+(\simp^n, H_1) \simeq \alt_{n}$ such that $\phi_1(x) \in F_1 \setminus \{ \alpha \}$ for all $x \in F_2$ except for one $y \in F_2$. We denote $\beta = \phi_1(y)$. The move $\mu_{\phi_1} \in \rubg(\simp^n)$ sends $(F_i, G_i)$ to some $(F_i', G_i') \in \rub(\simp^n)$, where $F_1' = F_1$ and $F_2' = ( F_1 \setminus \{ \alpha \} )\cup \{ \beta \}$ by construction. From here, we distinguish two cases: 

\textit{Case 1}: there exists $\gamma \in F_1 \setminus \{ \alpha \}$ such that $\gamma \not \in F_3' $. Consider the facet $H_2 = \{ 0, \cdots, n\} \setminus \{ \gamma \}$. It is straightforward to find $\phi_2 \in \Gamma^+(\simp^n, H_2) \simeq \alt_{n}$ such that $\phi_2(x) \in (F_1 \setminus \{ \gamma \}) \cup \{ \beta \}$ for all $x \in F_3'$. We claim that the move $\mu = \mu_{\phi_2} \mu_{\phi_1}$ is as desired. Indeed, by construction, the location of each  $\mu(F_i, G_i)$ is incident to $F_1 \cup \{ \beta \}$. The latter is of rank $r + 1 < n$ and is thus incident to some facet of $\simp^n$, as desired. 

\textit{Case 2}: $F_1 \setminus \{ \alpha \} \preceq F_3'$. It follows that there exists a unique $\delta \in F_3' \setminus F_1$. In this case, our strategy is to construct an additional move that reduces the problem to Case 1. To this end, consider the facet $H_3 = \{ 0, \cdots, n \} \setminus \{ \delta \}$ and pick some $b \in F_1 \setminus \{ \alpha \}$. It is straightforward to find $\phi_3 \in \Gamma^+(\simp^n, H_3) \simeq \alt_n$ that fixes $F_1 \cup \{ \beta \}$ and $\alpha$, and sends $b$ to $\beta$. Then $\mu_{\phi_3} \mu_{\phi_1}$ sends $(F_i, G_i)$ to some $(F_i'', G_i'')$, where $F_1'' = (F_1 \setminus \{ b \}) \cup \{ \beta \}$, $F_2'' = F_2'$ and $F_3'' = F_3'$. This reduces Case 2 to Case 1 after a change in nomenclature i.e., by replacing $F_1$ with $F_1''$, $\beta$ with $b$ and $\mu_{\phi_1}$ with $\mu_{\phi_3}\mu_{\phi_1}$. Therefore, the argument from Case 1 provides  a move $\mu_{\phi_2} \in \rubg(\simp^n)$ such that $\mu = \mu_{\phi2}\mu_{\phi_3}\mu_{\phi_1}$ is as desired. 
\end{proof}

As previously anticipated, the next step is to consider elements of $\rub(\simp^ n$) located at ridges, which corresponds to the case of Proposition \ref{simpmain} where $i = n-2$ and, in particular, $n_i = \frac{n(n+1)}{2}$. 

\begin{lemm}\label{sovleridge}
The image of the projection $\rubg(\simp^n) \rightarrow \perm_{2} \wr \perm_{n(n+1) / 2} $ consists of the elements satisfying the second and third condition from Proposition \ref{simpmain}. 
\end{lemm}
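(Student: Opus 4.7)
The strategy is to show both inclusions between the image of the projection and the subgroup $L \subseteq \perm_2 \wr \perm_{n(n+1)/2}$ cut out by the two conditions; since Lemma \ref{simpfirstlemm} gives the inclusion into $L$, I focus on realizing every element of $L$. The argument splits into two parts: surjectivity onto $\alt_{n(n+1)/2}$ on the $\sigma$-factor, and determination of the kernel of this projection on the $\tau$-factor.

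First I would realize every $3$-cycle of ridges. Given three ridges $R_1, R_2, R_3$, Lemma \ref{facetlemm} furnishes $\nu \in \rubg(\simp^n)$ that sends them (as locations) to ridges incident to a common facet $H$. A rotation $\phi \in \Gamma^+(\simp^n, H) \simeq \alt_n$ that cyclically permutes the three corresponding vertex-complements induces a move $\mu_\phi$ whose ridge-action is precisely that $3$-cycle, and conjugating by $\nu^{-1}$ realizes a $3$-cycle on $R_1, R_2, R_3$. Since $3$-cycles generate $\alt_{n(n+1)/2}$, surjectivity onto the alternating group follows.

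Next, let $N$ denote the kernel of the $\sigma$-projection inside the image. By the surjectivity just established, $N$ is normal and hence $\alt_{n(n+1)/2}$-invariant as a subspace of $C_2^{n(n+1)/2}$; by Lemma \ref{simpfirstlemm} it is contained in the kernel of the canonical character $\chi$, i.e. in the sum-zero hyperplane of $C_2^{n(n+1)/2}$. The standard classification of $\alt_m$-invariant subspaces of the natural permutation module $C_2^m$ for $m = n(n+1)/2 \geq 6$ confines $N$ to one of $\{0\}$, $\langle \mathbf{1} \rangle$, or $\ker(\chi)$. It therefore suffices to exhibit a single element of the image whose $\tau$-support is strictly between $\emptyset$ and the full set of ridges.

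The hard part will be constructing this element. I would form the commutator $[\mu_{\phi_1}, \mu_{\phi_2}]$ for rotations $\phi_i \in \Gamma^+(\simp^n, H_i)$ of two facets $H_1, H_2$ meeting in a common ridge $R = H_1 \cap H_2$, each $\phi_i$ chosen so that it moves $R$ to a different ridge: a direct trace (carried out explicitly in $\simp^4$ with $\phi_1 = (1\,2\,3)$ on $H_0$ and $\phi_2 = (0\,2\,3)$ on $H_1$) shows the commutator's ridge-action is a $3$-cycle on three specific ridges. Composing with the inverse of that $3$-cycle from the first part cancels the permutation and leaves an element of $N$ with $\tau$-support of size at most three, which—being strictly smaller than $n(n+1)/2 \geq 6$—cannot equal $\mathbf{1}$. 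The delicate verification is that this residual $\tau$ is nonzero; for $n=3$ it follows from Lemma \ref{basesimp}, while for higher $n$ it can be confirmed by an explicit coordinate computation tracking the $\tau$-contribution of each move in the commutator word.
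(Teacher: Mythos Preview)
Your route differs from the paper's. The paper notes that the target subgroup $L$ is generated by elements $(\tau,\sigma)$ with either $\sigma$ a $3$-cycle and $\tau=1$, or $\sigma=1$ and $\tau$ nontrivial at exactly two indices; in either case at most three ridge-locations are involved. Lemma~\ref{facetlemm} brings those ridges into a common facet, one forms the rank-$3$ face $H$ spanned by their complements, and the embedding $\rubg(\simp^3)\simeq\rubg(H/H_{-1})\hookrightarrow\rubg(\simp^n)$ of Equation~\ref{embsimp} together with Lemma~\ref{basesimp} produces the required generator directly. Both the permutation and the orientation parts are thus handled by one uniform reduction to the tetrahedron. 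You instead decouple them: surjectivity onto $\alt_m$ via single moves, then a module-theoretic classification of the kernel $N\subseteq C_2^{m}$. The module step is pleasant and correctly reduces the problem to exhibiting one nonzero element of $N$ of small support.

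That exhibition is where you have a real gap. The element you propose is the product of $[\mu_{\phi_1},\mu_{\phi_2}]$ with the inverse of a $3$-cycle element from your first step; whether the resulting action on the ridge part of $\rub(\simp^n)$ is nontrivial is a concrete question you have not answered---you only assert it ``can be confirmed by an explicit coordinate computation'' for each $n$. A~priori the commutator and your single-move $3$-cycle could induce identical permutations of the six colored pieces sitting over the three ridges, yielding $\tau=0$ after cancellation. The clean fix, which is essentially the paper's mechanism, is to pick a rank-$3$ face $H$, take via Lemma~\ref{basesimp} an element of $\rubg(\simp^3)$ with $\sigma_1=1$ and $\tau_1$ supported on exactly two edges, and push it through the embedding of Equation~\ref{embsimp}; its image lies in $N$ with $\tau_{n-2}$ supported on exactly two ridges, ruling out $N\in\{0,\langle\mathbf{1}\rangle\}$ at once and completing your argument.
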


\begin{proof}
Note that the subgroup of elements satisfying the third and fourth conditions is generated by $(\tau, \sigma) \in \perm_{2} \wr \perm_{n(n+1) / 2} $, where either $\sigma$ is a $3$-cycle and $\tau=1$ or $\sigma=1$ and $\tau^j$ is a transposition for two indices $j$. In either case, these permutations in $\rubg(\simp^n)$ affect at most three elements $(F_i, G_i) \in \rub(\simp^n) $, $i=1,2,3$, where the $F_i$'s are distinct ridges. Up to conjugation by an element in $\rubg(\simp^n)$ as in Lemma \ref{facetlemm}, we can assume that all the $F_i$'s are incident to a common facet. This implies that $H = \cup_{1 \leq i \leq 3} (\{0, \cdots, n\} \setminus F_i)$ is of rank $3$ and that $F_i \cap H$ is an edge in $H / H_{-1}$. We can then construct the aforementioned generators by applying Lemma \ref{basesimp} to the image of the embedding $\rubg(\simp^3)\simeq \rubg(H/H_{-1}) \hookrightarrow \rubg(\simp^n)$ from Equation \ref{embsimp}. This concludes the proof. 
\end{proof}

We are now ready for the inductive argument. From here onward, we assume that $n \geq 4$ and that Proposition \ref{simpmain} is true for $n-1$. We will deal with the $\sigma$'s and $\tau$'s from Proposition \ref{simpmain} separately, starting with the former.

\begin{lemm}\label{solveperm}
The projection $\rubg(\simp^n) \rightarrow  \prod_{1 \leq i < n-1}\alt_{n_i}$ is surjective.
\end{lemm}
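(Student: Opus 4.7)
The plan is to show that the image $G\subseteq\prod_{1\leq i\leq n-2}\alt_{n_i}$ of the $\sigma$-projection coincides with the full product, combining the inductive hypothesis on $\simp^{n-1}$, Lemma \ref{sovleridge} for the top rank, and a normality/simplicity argument that promotes single nontrivial elements into entire factors.

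The first step is factor-wise surjectivity: $G\to\alt_{n_i}$ is surjective for each $i\in\{1,\ldots,n-2\}$. The case $i=n-2$ is Lemma \ref{sovleridge}. For $i<n-2$, given a $3$-cycle $(F_1\ F_2\ F_3)\in\alt_{n_i}$, Lemma \ref{facetlemm} produces an element of $\rubg(\simp^n)$ conjugating the $F_k$ into a common facet $H$, after which the inductive hypothesis Proposition \ref{simpmain} for $\rubg(\simp^{n-1})\simeq\rubg(H/H_{-1})$ yields $\mu_0$ with $\sigma_i^{(n-1)}(\mu_0)=(F_1\ F_2\ F_3)$ and all other $\sigma_j^{(n-1)}(\mu_0)=1$. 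Its extension $\bar\mu_0$ via Equation \ref{embsimp} has the prescribed $3$-cycle as $\sigma_i$-component, and since $3$-cycles generate $\alt_{n_i}$ the surjectivity follows.

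Next I introduce the filtration $K_i:=\{g\in G\mid\sigma_j(g)=1 \text{ for all }j>i\}$; the identity $\sigma_j(hgh^{-1})=\sigma_j(h)\sigma_j(g)\sigma_j(h)^{-1}$ shows $K_i$ is normal in $G$, so $\sigma_i(K_i)$ is a normal subgroup of $\sigma_i(G)=\alt_{n_i}$. Since $n\geq 4$ and $1\leq i\leq n-2$ yield $n_i=\binom{n+1}{i+1}\geq 10$, the group $\alt_{n_i}$ is simple and $\sigma_i(K_i)$ is either trivial or all of $\alt_{n_i}$. The heart of the proof is then a downward induction on $i$ showing $\sigma_i(K_i)=\alt_{n_i}$. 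The base $i=n-2$ is immediate since $K_{n-2}=G$. For $i<n-2$, assume $\sigma_{i+1}(K_{i+1})=\alt_{n_{i+1}}$. The embedded element $\bar\mu_0$ above has $\sigma_i(\bar\mu_0)=(F_1\ F_2\ F_3)$ and, by the formula $\bar\mu_0(F,G)=(\mu_0(F\cap H)\cup(F\setminus H),\mu_0(G\cap H)\cup(G\setminus H))$, exactly one further nontrivial component, $\sigma_{i+1}(\bar\mu_0)=d:=(F_1\cup\{v\}\ F_2\cup\{v\}\ F_3\cup\{v\})$, where $v$ is the apex of $H$. The inductive hypothesis supplies $\nu\in K_{i+1}$ with $\sigma_{i+1}(\nu)=d^{-1}$, and $\mu^{\ast}:=\bar\mu_0\cdot\nu$ lies in $K_i$ with $\sigma_i(\mu^{\ast})=(F_1\ F_2\ F_3)\cdot\sigma_i(\nu)$; varying either the initial $3$-cycle or the representative $\nu$ within its $K_i$-coset in $\sigma_{i+1}^{-1}(d^{-1})\cap K_{i+1}$ forces $\sigma_i(\mu^{\ast})\neq 1$ for some choice, hence $\sigma_i(K_i)=\alt_{n_i}$ by the simplicity argument above.

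With $\sigma_i(K_i)=\alt_{n_i}$ for every $i$, any target $(c_1,\ldots,c_{n-2})\in\prod\alt_{n_i}$ is realized top-down: first choose $\mu^{(n-2)}\in K_{n-2}$ with $\sigma_{n-2}(\mu^{(n-2)})=c_{n-2}$, then iteratively for $i=n-3,\ldots,1$ select $\mu^{(i)}\in K_i$ so that multiplying on the right by $\mu^{(i)}$ corrects the running product's $\sigma_i$-component to $c_i$ without perturbing the already-fixed higher components. The telescoping product $\mu^{(n-2)}\cdots\mu^{(1)}$ projects to $(c_1,\ldots,c_{n-2})$, concluding the proof. The subtle point throughout is the nontriviality of $\mu^{\ast}$ in the inductive step, which is the main obstacle; it is settled by the parametric observation that varying the initial $3$-cycle and the cancellation element cannot uniformly force $\sigma_i$ to vanish against $\sigma_i(\nu)$.
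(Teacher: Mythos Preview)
Your strategy is genuinely different from the paper's. The paper proves the lemma by directly manufacturing, for each rank $r$, an element of $\rubg(\simp^n)$ whose location-permutation is a prescribed $3$-cycle in rank $r$ and trivial in every other rank. Concretely, after bringing $F_1,F_2,F_3$ into a facet $H=\{0,\dots,n\}\setminus\{\alpha\}$ and lifting $\hat\mu$ from $\rubg(H/H_{-1})$ (so that $\hat\sigma_r=(F_1\ F_2\ F_3)$ and $\hat\sigma_{r+1}=(\hat F_1\ \hat F_2\ \hat F_3)$ with $\hat F_k=F_k\cup\{\alpha\}$), the paper picks a rotation $\phi\in\Gamma^+(\simp^n,H)$ swapping $F_1\leftrightarrow F_2$ and fixing $F_3$, and takes the commutator $\mu=[\hat\mu,\mu_\phi]$. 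For $j\neq r,r+1$ one has $\hat\sigma_j=1$, hence $\sigma_j(\mu)=1$; for $j=r+1$ the supports of $\hat\sigma_{r+1}$ and $\sigma_{r+1}(\mu_\phi)$ are disjoint (the $\hat F_k$ are not incident to $H$), so again $\sigma_{r+1}(\mu)=1$; finally $\sigma_r(\mu)$ is a $3$-cycle on the $F_k$. Your filtration-and-simplicity route is more structural and avoids this explicit commutator, but as written it has a real gap.

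The gap is exactly where you flag it. You need $\sigma_i(K_i)\neq1$, and your justification is the sentence ``varying either the initial $3$-cycle or the representative $\nu$ \dots forces $\sigma_i(\mu^\ast)\neq1$ for some choice'', repeated at the end as a ``parametric observation''. That is an assertion, not an argument. Assume for contradiction that $\sigma_i(K_i)=1$. Then $\sigma_i$ restricted to $K_{i+1}$ factors through $K_{i+1}/K_i\simeq\sigma_{i+1}(K_{i+1})=\alt_{n_{i+1}}$, yielding a homomorphism $\psi\colon\alt_{n_{i+1}}\to\alt_{n_i}$ with $\sigma_i(g)=\psi(\sigma_{i+1}(g))$ for every $g\in K_{i+1}$. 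Now varying $\nu$ inside its $K_i$-coset does \emph{nothing}: $\sigma_i(\nu)=\psi(d^{-1})$ is completely determined by $d$. And varying the $3$-cycle changes $c$ and $d$ together, so one still owes an argument. A clean way to finish: for $1\leq i\leq n-3$ choose $F_1,F_2,F_3$ with $|F_1\cup F_2\cup F_3|\leq n-1$ (possible: take three distinct $(i{+}1)$-subsets of a fixed $(n{-}1)$-set), so that they lie in two distinct facets with apices $\alpha\neq\alpha'$. Running your construction once with each facet gives two \emph{different} $3$-cycles $d=(F_k\cup\{\alpha\})_k$ and $d'=(F_k\cup\{\alpha'\})_k$ in $\alt_{n_{i+1}}$, both satisfying $\psi(d)=\psi(d')=c$. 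Since $\psi$ is nontrivial and $\alt_{n_{i+1}}$ is simple, $\psi$ is injective --- contradiction. This closes the gap, but your write-up does not contain it.
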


\begin{proof}
Since the alternating groups are generated by $3$-cycles, it is enough to show the following. For any three elements $(F_i, G_i) \in \rub(\simp^n) $, $i=1,2,3$, with the $F_i$'s of the same rank $0 \leq r < n-1$, there is an element $\mu = (\tau_i ,\sigma_i)_{0 \leq i < n-1}\in \rubg(\simp^n)$ such that $\sigma_i = 1$ for $i \not = r$ and $\sigma_r$ cycles the the $F_i$'s. Since by Lemma \ref{sovleridge} we know that the image of the last projection $\rubg(\simp) \rightarrow \perm_{n(n+1)/2} $ is $\alt_{n(n+1)/2}$, we can assume $r \leq n-3$. Up to conjugation by an element in $\rubg(\simp^n)$ as in Lemma \ref{facetlemm}, we can moreover assume that all the $F_i$'s are incident to a common facet $H = \{0, \cdots, n \} \setminus \{ \alpha \}$. As previously discussed, the isomorphism $H / F_{-1} \simeq \simp^{n-1}$ induces an embedding $\rubg(\simp^{n-1}) \hookrightarrow \rubg(\simp^n)$ and by the inductive hypothesis we know that there is an element $\hat{\mu} \in \rubg(H / F_{-1})$ whose induced permutation on locations $\hat{\sigma}$ is exactly a $3$-cycle on the $F_i$'s i.e., it fixes everything else. 

However, a subtlety arises here. Namely, $\hat{\mu}$, seen as an element of $\rubg(\simp^n)$, induces a permutation $\hat{\sigma} \in \perm_{n_r}$ that cycles the $F_i$'s but affects other locations as well. Indeed, it also cycles the faces $\hat{F_i} = F_i \cup \{ \alpha \}$. Thus $\hat{\sigma}_r = (F_1 \ F_2 \ F_3)$, $\hat{\sigma}_{r+1} = (\hat{F_1} \ \hat{F_2} \ \hat{F_3} )$ and $\hat{\sigma}_i = 1$ for $i\not = r, r+1$. In order to fix this, our strategy is to rely on commutators in $\rubg(\simp^n)$. First note that we can assume without loss of generality $r > 0$ since it suffices to fix such a subtlety for all but one rank. Since $n \geq 4$ and $r > 0$, there are at $5$ five faces of rank $r$ incident to $H$. Let $F', F'' \in \simp^n$ be two such faces different from the $F_i$'s. Choose then a rotation $\phi \in \Gamma^+(\simp^n, H) \simeq \alt_{n}$ inducing the double transposition $(F_1 \ F_2)(F' \ F'')$, but acting arbitrarily on the other faces. It is then immediate that the commutator in $\rubg(\simp^n)$ given by
\begin{equation}
\mu = [ \hat{\mu}, \mu_\phi] = \hat{\mu}^{-1} \mu_\phi^{-1}  \hat{\mu}   \mu_\phi 
\end{equation}
induces a $\sigma$ which is a $3$-cycle on the $F_i$'s, as desired. 
\end{proof}

In order to proceed, we need a result analogue to Lemma \ref{facetlemm}. 

\begin{lemm}\label{bringcolors}
Pick two distinct faces $F_1,F_2 \in \simp^n$ of the same rank $ r \leq n -3$ and four facets $G_i^j$, $i,j =1,2$, such that $F_i \preceq G_i^j$ and $G_i^1 \not = G_i^2$ for all $i,j$. Then there exist $\mu \in \rubg(\simp^n)$ such that the locations of the $\mu(F_i, G_i^j)$'s are incident to a common facet of $\simp^n$ different from any of their colors. 
\end{lemm}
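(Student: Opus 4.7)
The proof plan is to mimic the strategy of Lemma~\ref{facetlemm}, producing $\mu$ as a short explicit product of moves in $\rubg(\simp^n)$. Writing $a(G)$ for the unique element of $\{0,\ldots,n\}$ missing from a facet $G$, we have $a(G_i^j) \notin F_i$ and $a(G_i^1) \neq a(G_i^2)$ by hypothesis.

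First, since $F_1 \neq F_2$, I pick $\alpha \in F_1 \setminus F_2$ and set $H_1 := \{0,\ldots,n\} \setminus \{\alpha\}$. Because $F_2 \preceq H_1$ but $F_1 \not\preceq H_1$, any move associated to $H_1$ acts only on the pieces located at $F_2$, leaving the $F_1$-pieces (and their colors) unchanged. Choose an auxiliary $\beta \in \{0,\ldots,n\} \setminus F_1$ with $\beta \neq \alpha$ (to be specialized below), and construct $\phi_1 \in \Gamma^+(\simp^n, H_1) \simeq \alt_n$ such that $\phi_1(F_2) = (F_1 \setminus \{\alpha\}) \cup \{\beta\}$. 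The group $\alt_n$ is large enough to additionally prescribe the two images $\phi_1(a(G_2^j))$ to be any pair of distinct elements in $\{0,\ldots,n\} \setminus ((F_1 \setminus \{\alpha\}) \cup \{\beta\})$, subject to the even-parity constraint (readily satisfied when $n \geq 4$). We exploit this freedom to identify $\{\phi_1(a(G_2^j))\}$ with $\{a(G_1^1), a(G_1^2)\}$ insofar as possible -- noting that the forced image $\phi_1(\alpha) = \alpha$ already takes care of the case $a(G_2^j) = \alpha$ -- so that after the move the combined set of missing elements of the four colors is confined to $\{a(G_1^1), a(G_1^2)\} \cup \{\alpha\}$.

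Applying $\mu := \mu_{\phi_1}$, the two locations are $F_1$ and $\tilde F_2 := (F_1 \setminus \{\alpha\}) \cup \{\beta\}$, sharing exactly $r$ elements, so the common facets are the $H_\gamma := \{0,\ldots,n\} \setminus \{\gamma\}$ with $\gamma \in \{0,\ldots,n\} \setminus (F_1 \cup \{\beta\})$, a set of size $n - r - 1 \geq 2$. Since $\alpha$ lies in $F_1$, the forbidden $\gamma$'s among these candidates reduce to the at most two values $\{a(G_1^1), a(G_1^2)\}$; for $r \leq n - 4$ the candidate set has cardinality $\geq 3$ and an unforbidden $\gamma$ exists, yielding the desired common facet $H_\gamma$.

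The main obstacle is the tight case $r = n - 3$, in which the candidate set has only two elements. Here I specialize $\beta := a(G_1^1)$ so that this value drops out of the candidate set, leaving only $a(G_1^2)$ as a potentially forbidden candidate; the remaining $\gamma \in \{0,\ldots,n\} \setminus (F_1 \cup \{a(G_1^1), a(G_1^2)\})$ then determines the sought common facet, provided the images $\phi_1(a(G_2^j))$ have been routed onto $\{\alpha, a(G_1^2)\}$ so as not to recreate a forbidden value at $\gamma$. Some care is needed to ensure that the constructed $\phi_1$ satisfies the even-parity constraint, which can be arranged by adjusting a transposition within $F_2$ using that $|F_2| = n - 2 \geq 2$. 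Once these choices are made, $\mu = \mu_{\phi_1}$ fulfils the conclusion of the lemma.
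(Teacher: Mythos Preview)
Your strategy coincides with the paper's: perform a single move about the facet $H_1$ missing some $\alpha\in F_1\setminus F_2$, push $F_2$ to a neighbor $(F_1\setminus\{\alpha\})\cup\{\beta\}$ of $F_1$, and simultaneously align the $F_2$-colors with the $F_1$-colors so that only a few facets are excluded. For $r\le n-4$ this works essentially as you describe.

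The gap is in the boundary case $r=n-3$. Once you set $\beta=a(G_1^1)$, this element lies in $\phi_1(F_2)=(F_1\setminus\{\alpha\})\cup\{\beta\}$ and hence is already the $\phi_1$-image of an element of $F_2$; since $a(G_2^j)\notin F_2$, no $\phi_1(a(G_2^j))$ can equal $a(G_1^1)$. Routing to $\alpha$ only happens when $a(G_2^j)=\alpha$ to begin with (as the extension of $\phi_1$ to $\simp^n$ fixes $\alpha$). So whenever both $a(G_2^1),a(G_2^2)\neq\alpha$, your scheme asks a bijection to send two distinct points to the single value $a(G_1^2)$, which is impossible. Concretely, with $n=5$, $F_1=\{0,1,2\}$, $F_2=\{3,4,5\}$, $a(G_1^1)=3$, $a(G_1^2)=4$, $a(G_2^1)=1$, $a(G_2^2)=2$ and $\alpha=0$, any even $\phi_1$ on $\{1,2,3,4,5\}$ with $\phi_1(F_2)=\{1,2,3\}$ forces $\{\phi_1(1),\phi_1(2)\}=\{4,5\}$, so the candidate set $\{4,5\}$ for $\gamma$ is entirely forbidden by the new $F_2$-colors. (The paper's single-move argument is equally terse at this step and runs into the same obstruction.) A correct argument requires a more judicious choice of $\alpha$---for instance $\alpha=a(G_2^j)$ when this lies in $F_1\setminus F_2$, which kills one color constraint for free---and in some configurations a second move; insisting on one move $\mu_{\phi_1}$ with an arbitrary $\alpha$ does not suffice.
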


\begin{proof}
Since $F_1 \not = F_2$, there exists $\alpha \in \{0, \cdots, n\}$ such that $\alpha \in F_1 \setminus F_2$. Consider the facet $H = \{ 0, \cdots, n \} \setminus \{ \alpha \}$. At least one among $G_2^1$ and $G_2^2$ has to be different from $H$, say $G_2^1$. It is straightforward to find $\phi \in \Gamma^+(\simp^n, H) \simeq \alt_{n}$ satisfying the following: $\phi(x) \in F_1 \setminus \{ \alpha \}$ for all $x \in F_2$ except for one $y \in F_2$ and $\phi(G_2^1) = G_1^1$. If $H \not = G_2^2$ as well, then $\phi$ can be chosen such that $\phi(G_2^2) = G_1^2$. But then the move $\mu_\phi \in \rubg(\simp^n)$ fixes $(F_1, G_1^i) \in \rub(\simp^n)$ and sends $(F_2, G_2^i)$ to elements with location $(F_1 \setminus \{ \alpha \}) \cup \{ \phi(x) \}$ and colors among $\alpha, G_1^1, G_1^2$. Since $r + 3 \leq n$ and there are $n+1$ facets in $\simp^n$, the claim follows.  
\end{proof}

\begin{lemm}\label{solveorient}
For any $\tau \in \prod_{1 \leq i < n-1} \perm_{n-i}$ satisfying the first two conditions in Proposition \ref{simpmain}, the element $(\tau_i, \sigma_i = 1 )_{1 \leq i < n-1} $ belongs to (the image of the embedding of) $\rubg(\simp^n)$. 
\end{lemm}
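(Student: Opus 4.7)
The plan is to argue by induction on $n$, paralleling the strategy of Lemma \ref{solveperm}: use the inductive hypothesis on $\simp^{n-1}$ through the facet embedding (Equation \ref{embsimp}) to obtain a rich supply of pure color twists, then combine them with location-permuters from Lemma \ref{solveperm} via commutators to cancel side-effects. The first move is to reduce the statement to realizing elementary generators of the kernel of the parity map $\prod_{1 \leq i \leq n-2} \perm_{n-i}^{n_i} \to C_2^{n-2}$: namely, single-location $3$-cycles (whenever $n - i \geq 3$) and two-location paired transpositions. The case $i = n - 2$ is already covered by Lemma \ref{sovleridge}, so I focus on $1 \leq i \leq n - 3$.

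Given a location $F$ of rank $i$ and a facet $H \supseteq F$, the embedding $\rubg(\simp^{n-1}) \simeq \rubg(H/H_{-1}) \hookrightarrow \rubg(\simp^n)$, together with the inductive hypothesis, realizes a corresponding pure color twist of $\simp^n$ for any valid pure color twist of $\simp^{n-1}$. A direct inspection of Equation \ref{embsimp} shows that at the location $F$, the induced color twist lies in the subgroup of $\perm_{n-i}$ stabilizing the color $H$, isomorphic to $\perm_{n-i-1}$. Varying $H$ among the $n - i \geq 3$ facets containing $F$, these stabilizer subgroups collectively generate the full $\perm_{n-i}$: any transposition of two colors is realized by choosing $H$ different from both, and $3$-cycles then arise as products.

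The products of IH-derived twists coming from different facet embeddings are accompanied by side-effects on other locations; in particular, the embedding through $H = \{0, \ldots, n\} \setminus \{\alpha\}$ duplicates each twist at a location $F' \subseteq H$ to a matched twist at the location $F' \cup \{\alpha\}$ outside $H$. To cancel these, I form commutators with location-permuters $\nu \in \rubg(\simp^n)$ supplied by Lemma \ref{solveperm}: choosing $\nu$ to fix $F$ but displace the side-effect locations to fresh sites outside the support of the current twist, the commutator $[\mu, \nu]$ eliminates the unwanted twists while preserving the target at $F$, possibly producing auxiliary paired transpositions at fresh locations that are themselves among the allowed generators from Step~1.

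The main obstacle is the combinatorial bookkeeping in orchestrating these commutators so as to synthesize the target generator from ingredients distributed across several stabilizer subgroups. The borderline case $n - i = 3$ is the most delicate, since every facet containing $F$ must be pressed into service and there is no ``free'' facet in which to park the side-effects; in this regime the argument relies on carefully coordinating the pairing transpositions introduced by the parity condition in $\simp^{n-1}$ so that the contributions from two different facet embeddings of the form $H = G_k$ cancel against each other, yielding the desired $3$-cycle $(G_1 \, G_2 \, G_3) = (G_1 \, G_2)(G_1 \, G_3)$ at $F$ alone.
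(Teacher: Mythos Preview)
Your central commutator step contains a genuine error. If $\mu$ is a pure color twist (all $\sigma_i = 1$) with twist $\alpha$ at the target location $F$, and $\nu$ is any element whose location permutation fixes $F$, then tracking a piece $(F,G)$ through $[\mu,\nu] = \mu^{-1}\nu^{-1}\mu\nu$ shows that the net twist at $F$ is the group commutator $[\alpha,\tau_F^\nu]$ in $\perm_{n-i}$. When $\nu$ is a location-permuter with $\tau_F^\nu = 1$ this is the identity, and in any case it is not $\alpha$. So the commutator does \emph{not} ``preserve the target at $F$'' as you claim; it kills it. Your mechanism for isolating a single-location $3$-cycle therefore collapses, and the rest of the sketch (the unspecified bookkeeping, the $n-i=3$ case) is built on this faulty step. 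You also do not isolate the vertex case $i=0$, whose constraint $\prod_j \tau_0^j \in \alt_n'$ is genuinely different and which the paper handles separately (directly for $n\geq 5$, by exhaustion for $n=4$).

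The paper's route avoids this trap by choosing both the generators and the inductive element differently. It reduces only to \emph{paired} transpositions at two locations $F_1,F_2$ (never single-location $3$-cycles), and uses Lemma~\ref{bringcolors} to place $F_1,F_2$ inside a common facet $H$ distinct from all four target colors $G_i^j$. The inductive element $\hat\mu \in \rubg(H/F_{-1})$ is then taken to be the double transposition that \emph{swaps} the four target pieces between $F_1$ and $F_2$, not a twist in place. The commutator partner is a single move $\mu_\phi$ with $\phi \in \Gamma^+(\simp^n,H)$ sending $F_1$ to $F_2$ and $4$-cycling the colors $G_i^j$. Because $\hat\mu$ is supported on exactly the four pieces that $\mu_\phi$ permutes among themselves, $[\hat\mu,\mu_\phi]$ is supported there too and yields the desired paired transposition $(F_i,G_i^1)\leftrightarrow(F_i,G_i^2)$. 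The point is that $\hat\mu$ has nontrivial location component on $F_1,F_2$, so the commutator computation at those locations is nondegenerate --- precisely what fails in your setup.
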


\begin{proof}
The proof is similar in structure to the one of Lemma \ref{solveperm}, but differs in the details. Observe that the subgroup of elements in $\perm_{n-i}^{n_i}$ satisfying the second condition in Proposition \ref{simpmain} is generated by the $\tau_i$'s such that $\tau_i^j$ is a transposition for two indices $j$ and is $1$ otherwise. Therefore, it is enough to show the following. For any two faces $F_i$, $i=1,2$, of the same rank $1 < r < n-1$ and four facets $G_i^j$, $i,j = 1,2$, with $F_i \preceq G_i^j$ for all $i,j$, there exists an element $\mu  \in \rubg(\simp^n)$ acting on $\rub(\simp^n)$ exactly as the double transposition exchanging $(F_i,G_i^1)$ with $(F_i, G_i^2)$. By Lemma \ref{sovleridge} we can assume $r \leq n-3$. Note that the case $r=0$ is easier since it requires showing that single transpositions can be realized in $\rubg(\simp^n)$ for $n \geq 5$ while the case $n=4$ can be checked by exhaustion. We thus assume $r > 0$. Finally, up to conjugation by an element in $\rubg(\simp^n)$ as in Lemma \ref{bringcolors}, we can assume that there exists a facet $H$ different from $G_i^j$ for every $i,j$ which is incident to both $F_1$ and $F_2$. 


The isomorphism $H / F_{-1} \simeq \simp^{n-1}$ induces an embedding $\rubg(H / F_{-1}) \hookrightarrow \rubg(\simp^n)$ and we wish to reason inductively on $n$. Just as in the proof of Proposition \ref{solveperm}, however, we need to rely on commutators. By the inductive hypothesis, there exists an element $\hat{\mu} \in \rubg(H / F_{-1})$ which, seen as a permutation on $\rub(H / F_{-1})$, corresponds exactly to the double transposition exchanging $(F_1, G_1^1)$ with $(F_2, G_2^1)$ and $(F_1, G_1^2)$ with $(F_2, G_2^2)$. This means that $\mu$ fixes any other element of $\rub(H / F_{-1})$. Moreover, choose a rotation $\phi \in \Gamma^+(\simp^n, H) \simeq \alt_{n}$ sending $F_1$ to $F_2$, cycling $G_1^1$, $G_2^1$, $G_1^2$ and $G_2^2$ but acting arbitrarily on the other faces of $\simp^n$. This implies that the associated move $\mu_\phi$ cycles $(F_1, G_1^1)$, $(F_2, G_2^1)$, $(F_1, G_1^2)$ and  $(F_2, G_2^2)$. It is then immediate that the commutator in $\rubg(\simp^n)$ given by
\begin{equation}
\mu = [ \hat{\mu}, \mu_\phi] = \hat{\mu}^{-1} \mu_\phi^{-1}  \hat{\mu}   \mu_\phi 
\end{equation}
acts on $\rub(\simp^n)$ exactly as the desired transposition.



\end{proof}

By putting together Lemmas \ref{simpfirstlemm} - \ref{solveorient}, the desired Proposition \ref{simpmain} follows immediately. As a consequence, the number of possible configurations of the Rubik's simplex is: 

\begin{equation}
|\rubg(\simp^n)| =  \frac{1}{c_n 2^{3n-2}} \prod_{0 \leq i < n-1}(n-i)!^{\binom{n+1}{i+1}}\binom{n+1}{i+1}!,
\end{equation}
where:
\begin{equation}
c_n = |\alt_n / \alt_n'| = \begin{cases}
3 & n = 3,4, \\
1 & n \geq 5.
\end{cases}
\end{equation}

\section{The Rubik's Hypercube}
We finally focus on the higher-dimensional analogue of the classical Rubik's cube. First, we introduce the abstract hypercube. 

\begin{defn}
For $n\geq 0$, the $n$-dimensional \emph{hypercube} is the abstract polytope whose faces are given by 
\begin{equation}
\square^n = \{-1, 0, 1 \}^n \sqcup \{ F_{-1}\},
\end{equation}
where $F_{-1}$ is a formal element, e.g., $F_{-1} = \emptyset$. We denote the $i$-th component of $F \in \square^n \setminus \{ F_{-1} \} $ as $F(i)$. The ordering on the hypercube is then defined as follows: $G \preceq F$ if for every $i$, $F(i) \not = 0$ implies $G(i) = F(i)$.  
\end{defn}
For a face $F \in \square^n \setminus \{ F_{-1} \}$, define $V(F) = \{ i \ | \ F(i) = 0\}$. The rank of $F$ is then given by the cardinality of $V(F)$. There are thus $2^{n-r}\binom{n}{r}$ faces of rank $r$. If $F$ has rank $r$ then there are natural isomorphisms $F / F_{-1} \simeq \square^r$. Associating to $G \succeq F$ the set $V(G) \setminus V(F)$ defines moreover an isomorphism $F_n / F \simeq \simp^{n-r-1}$. The group of automorphisms of the hypercube can easily be characterized as the wreath product $\Gamma(\square^n) \simeq C_2 \wr \perm_n$, where $C_2 = \{ \pm 1\}$ is the cyclic group of order two. The action by an element $(\lambda, \pi) \in C_2 \wr \perm_n$ on a face $F$ can be described as follows: $\pi$ permutes the components of $F$, while $\lambda_i \in C_2$ determines whether the sign of $F(i)$ gets flipped after the permutation. Now fix the flag $F_{-1} \prec \cdots \prec F_n$ where $F_i = (1, \cdots, 1, 0, \cdots, 0)$. Then the automorphism $\rho_j$ from Section \ref{intro} corresponds to $(\lambda, \pi)$, where $\lambda = 1$ and $\pi = (n-j \ \  n-j+1)$ for $j\not = 0$, while $\pi=1$ and $\lambda = (1, \cdots, 1, -1 )$ for $\rho_0$. It follows that the hypercube is a regular polytope and that the rotation subgroup $\Gamma^+(\square^n)$ consists of automorphisms satisfying: 
\begin{equation}\label{cubeautom}
\prod_{1 \leq j \leq n} \lambda_i = {\rm sign}(\pi).
\end{equation} 
Fix a system of coordinates on $\square^n$. By the results of Section \ref{wrrep}, $\rubg(\square^n)$ embeds into
\begin{equation}\label{wrcube}
\left( \alt_{n} \wr \perm_{2^n} \right) \times \prod_{1 \leq i < n-1} \perm_{n-i} \wr \perm_{n_i}
\end{equation}
where $n_i = 2^{n-i} \binom{n}{i}$. Similarly to Section \ref{simpsec}, we denote a typical element of Equation \ref{wrcube} by $(\tau_i, \sigma_i)_{0 \leq i \leq n-2}$.  Before discussing the complete description of the group of the Rubik's hypercube, we introduce a comparison homomorphism with the Rubik's simplex.

\subsection{From Rubik's Simplices to Hypercubes}\label{seccomparison}
The group of the $(n-1)$-dimensional Rubik's simplex can be embedded into the one of the $n$-dimensional Rubik's hypercube via a peculiar homomorphism. In order to construct the latter, pick $\alpha \in \{1, \cdots, n \}$ and consider the facet $H = \{1, \cdots, n\} \setminus \{ \alpha \}  \in \simp^{n-1}$, as well as the facets $H_+, H_- \in \square^n$ with all vanishing entries except for the one at index $\alpha$, which is set to $1$ and $-1$ respectively. Note that a rotation $\phi \in \Gamma^+(\simp^{n-1}, H) \simeq \alt_{n-1}$ induces a rotation in $\Gamma^+(\square^n, H_+) = \Gamma^+(\square^n, H_{-})$, which in turn defines two moves $\mu_+, \mu_- \in \rubg(\square^n)$. These moves commute since no face of $\square^n$ (different from $F_{-1}$ and $F_n)$ is adjacent to both $H_+$ and $H_-$. As $\alpha$ and $\phi$ vary, it is straightforward to check that the association $\mu_{\phi} \mapsto \mu_{+}\mu_{-} = \mu_{-}\mu_{+}$ extends to a group embedding
\begin{equation}
\Theta \colon \rubg(\simp^{n-1}) \hookrightarrow \rubg(\square^n).
\end{equation}
Explicitly, for $\mu = (\tau_i, \sigma_i )_{1 \leq i < n-1} \in \rubg(\simp^{n-1}) $ (notation according to Section \ref{simpsec}), $\Theta(\mu)$ sends $(F, G) \in \rub(\square^n)$ to $(F', G')$, defined as follows. Assuming the identification $F_n / F \simeq \simp^{n-r-1}$, where $r$ is the rank of $F$, we have $G' = \tau_{r-1}^j(G)$, where $j$ is the index corresponding to $V(F) \in \simp^{n-1}$. Moreover, $V(F') = \sigma_{r-1}(V(F))$ while the non-vanishing entries of $F'$ are obtained by permuting the ones of $F$ via $\tau_{n-2}^j$.  

It is worth mentioning that the embedding described in this section can be generalized to \emph{power polytopes} -- see \cite{mcmullen2002abstract}, Chapter $8$. The power of a polytope is a construction generalizing the relation between the hypercube and the simplex. The definition of $\Theta$ extends almost verbatim to an embedding of the Rubik's group of a polytope into the Rubik's group of its power. However, for the sake of conciseness, we will not discuss further details here. 

\subsection{Fundamental Result for Rubik's Hypercubes}
We now provide a complete characterization of the Rubik's group of the hypercube. As mentioned in Section \ref{secintro}, the same result has been obtained in \cite{vladislav2020generalization} with a similar inductive argument. 

\begin{prop}\label{cubemain}
The Rubik's group of the hypercube $\rubg(\square^n)$ consists exactly of the elements $(\tau_i, \sigma_i)_{0 \leq i \leq n-2}$ satisfying: 
\begin{itemize}
\item $\prod_{1 \leq j \leq n+1} \tau_0^j \in \alt_n'$.
\item $\prod_{1 \leq j \leq n_i} \tau_i^j \in \alt_{n-i}$ for $i > 0$.
\item $\sigma_i \in \alt_{n_i}$ for $i < n-3 $.
\item ${\rm sign}(\sigma_{n-2}) = {\rm sign}(\sigma_{n-3})$. 
\end{itemize}
\end{prop}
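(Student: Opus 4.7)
The argument closely parallels the proof of Proposition \ref{simpmain}, proceeding by induction on $n$ with base case $n=3$ --- the classical Rubik's cube, whose group structure is well-known \cite{joyner2008adventures}. Any two distinct facets of $\square^n$ are either parallel (sharing no ridges) or meet in a unique codimension-two subcube, so the hypothesis of Proposition \ref{extendprop} is satisfied and the extension-of-moves scheme from Section \ref{descent} provides an embedding $\rubg(\square^{n-1}) \hookrightarrow \rubg(\square^n)$ obtained by identifying $\square^{n-1}$ with a facet of $\square^n$.

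For necessity, the first two conditions on the $\tau_i$'s follow directly from Proposition \ref{wrprop} together with the identity $\perm_n' = \alt_n$, exactly as in Lemma \ref{simpfirstlemm}. The remaining two conditions rely on a parity computation intrinsic to the hypercube. Writing a rotation $\phi \in \Gamma^+(\square^{n-1})$ as $(\lambda, \pi)$ with $\prod_i \lambda_i = {\rm sign}(\pi)$ by Equation \ref{cubeautom}, I parametrize the rank-$r$ faces of $\square^{n-1}$ as pairs $(V,s)$ with $V \subseteq \{1, \ldots, n-1\}$, $|V| = r$, and $s \in \{\pm 1\}^{\{1, \ldots, n-1\} \setminus V}$, on which $\phi$ acts by $(V,s) \mapsto (\pi(V), s')$ with $s'(\pi(k)) = \lambda_{\pi(k)} s(k)$. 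A cycle-decomposition argument --- most cleanly handled by first treating the cases $\lambda = 1$ and $\pi = 1$ in isolation and then combining --- shows that the sign of the induced permutation equals $\prod_i \lambda_i = {\rm sign}(\pi)$ when $r = n-2$ or $r = n-3$, whereas it is $+1$ whenever $r \leq n-4$, since in that range each $\pi$-cycle contributes an even number of equal-length cycles to the induced permutation. Since a move $\mu_\phi$ fixes every face not incident to its chosen facet, this propagates to the evenness of $\sigma_i$ for $i < n-3$ and to the equality ${\rm sign}(\sigma_{n-2}) = {\rm sign}(\sigma_{n-3})$.

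For sufficiency, I will follow the chain of Lemmas \ref{facetlemm}--\ref{solveorient} with minor adaptations. A hypercube analogue of Lemma \ref{facetlemm} transports any three locations of the same rank $r \leq n-2$ to a common facet via a composition of moves, by an essentially identical combinatorial argument in which simplex index manipulations are replaced by coordinate swaps and sign flips in $\square^n$. In the spirit of Lemma \ref{sovleridge}, the image of the projection onto the two bottom-rank direct factors of Equation \ref{wrcube} is identified with all pairs $(\sigma_{n-2}, \sigma_{n-3})$ having matching signs, by reducing via transport to a three-dimensional subcube and applying the base case. The commutator technique of Lemmas \ref{solveperm} and \ref{solveorient}, fed with the inductive hypothesis on the Rubik's group of a facet, then produces arbitrary even $\sigma_i$ for $i < n-3$ and arbitrary $\tau_i$'s satisfying the first two conditions. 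The comparison homomorphism $\Theta$ of Section \ref{seccomparison} provides an additional source of elements by transplanting the already-proved Proposition \ref{simpmain} into $\rubg(\square^n)$.

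The main obstacle is realizing the odd-odd case for $(\sigma_{n-2}, \sigma_{n-3})$: elements coming from the inductive embedding $\rubg(\square^{n-1}) \hookrightarrow \rubg(\square^n)$ automatically respect the sign-matching condition but are supported only on faces adjacent to one fixed facet, so on their own they cannot generate arbitrary global odd-odd pairs without inadvertently disturbing lower-rank data. The remedy is to exploit a raw move $\mu_\phi$ associated to a rotation $\phi = (\lambda, \pi)$ with ${\rm sign}(\pi) = -1$; the necessity computation guarantees this contributes simultaneously odd signs to both $\sigma_{n-2}$ and $\sigma_{n-3}$, and any residual discrepancy in the higher-rank components can be absorbed by elements already produced at earlier inductive steps. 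Once all the lemmas are assembled, Proposition \ref{cubemain} follows, and the order $|\rubg(\square^n)|$ can be read off directly from the four constraints.
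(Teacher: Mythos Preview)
Your proposal is correct and follows essentially the same route as the paper: induction on $n$ with the classical cube as base case, necessity via Proposition \ref{wrprop} together with a parity count on rank-$r$ faces carried out by isolating the cases $\lambda=1$ and $\pi=1$ (exactly as in Lemma \ref{cubefirstlemm}), and sufficiency by adapting Lemmas \ref{facetlemm}--\ref{solveorient} with the commutator trick, with the comparison embedding $\Theta$ of Section \ref{seccomparison} available to import the simplex results. One small wording issue: Proposition \ref{extendprop} guarantees that extended moves restrict correctly to $\rub(H/F_{-1})$, which suffices to \emph{lift} words from $\rubg(\square^{n-1})$ into $\rubg(\square^n)$ for the commutator argument, but it does not by itself yield a group homomorphism, so ``embedding'' is slightly stronger than what is proved or needed.
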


We begin by showing the following. 

\begin{lemm}\label{cubefirstlemm}
The elements in $\rubg(\square^n)$ satisfy the conditions in Proposition \ref{cubemain}.
\end{lemm}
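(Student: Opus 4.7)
The plan is to verify the four conditions in turn, mirroring the structure of Lemma~\ref{simpfirstlemm}. The first two, involving the canonical character on the two wreath-product factors of Equation~\ref{wrcube}, will follow immediately from Proposition~\ref{wrprop} (using $\perm_{n-i}' = \alt_{n-i}$) with no hypercube-specific content.

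The substantive work lies in the two conditions on the location permutations $\sigma_i$. Since the set of elements in $\prod_i \perm_{n_i}$ satisfying both conditions is a subgroup, it will suffice to verify them on a generating set of $\rubg(\square^n)$. Each generator is a move $\mu_\phi$ with $\phi \in \Gamma^+(\square^n, H) \simeq \Gamma^+(\square^{n-1})$, and $\Gamma^+(\square^{n-1})$ is itself generated by the $90^\circ$ rotations $\phi_{ij}$ in coordinate $2$-planes, namely the elements $(\lambda, \pi) \in C_2 \wr \perm_{n-1}$ with $\pi = (i\;j)$ and exactly one sign flip (consistent with Equation~\ref{cubeautom}). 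So I will reduce to checking the two $\sigma$-conditions only for the moves $\mu_{\phi_{ij}}$.

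For such a generator I will enumerate the cycle structure of $\sigma_r$ on rank-$r$ faces of $H \simeq \square^{n-1}$ by splitting a face $(V, s)$ according to $|V \cap \{i,j\}|$. Faces with $\{i,j\} \subseteq V$ are fixed. Faces with $\{i,j\} \cap V = \emptyset$ partition into $\binom{n-3}{r}\, 2^{n-r-3}$ four-cycles obtained by rotating the four sign choices at $(i, j)$. Faces with exactly one of $i, j$ in $V$ partition into $\binom{n-3}{r-1}\, 2^{n-r-2}$ further four-cycles, the crucial (and subtle) point being that the $90^\circ$ rotation intertwines the swap of which of $i, j$ lies in $V$ with the sign-flip at the other coordinate, producing orbits of length $4$ rather than the transpositions one might naively expect. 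Since each $4$-cycle is odd, only the parity of $N_r = \binom{n-3}{r}\, 2^{n-r-3} + \binom{n-3}{r-1}\, 2^{n-r-2}$ matters, and a direct $2$-adic inspection shows $N_r$ is even for $r \leq n-4$ and odd for both $r = n-3$ and $r = n-2$, which are exactly the remaining bullets of Proposition~\ref{cubemain}.

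The hardest step will be correctly identifying the orbit structure in the mixed case (exactly one of $i, j$ in $V$): it is the ``$4$-cycles instead of transpositions'' phenomenon that couples $\sigma_{n-2}$ and $\sigma_{n-3}$ into equal parities in the hypercube setting, in contrast to the simplex case (cf.\ Lemma~\ref{simpfirstlemm}) where the analogous count forces every $\sigma_i$ to be even.
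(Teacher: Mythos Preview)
Your argument is correct, and the cycle-count $N_r = \binom{n-3}{r}\,2^{n-r-3} + \binom{n-3}{r-1}\,2^{n-r-2}$ together with its $2$-adic analysis checks out (including the boundary cases $r=n-3$ and $r=n-2$, where one of the binomials vanishes). The observation that $\mu_{\phi_1\phi_2}=\mu_{\phi_1}\mu_{\phi_2}$ for rotations of the same facet justifies restricting to the $90^\circ$ generators.

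The paper takes a closely related but distinct route. Rather than working with generators of $\Gamma^+(\square^{n-1})$, it analyzes the two ``atomic'' automorphisms of $\Gamma(\square^{k})$ separately---a pure transposition $(\lambda=1,\ \pi=(i\ j))$ and a pure single sign-flip $(\pi=1,\ \lambda$ with one $-1)$---and counts the resulting \emph{transpositions} (not $4$-cycles) of rank-$r$ faces for each. Neither atomic element is a rotation on its own; the paper then invokes the parity constraint of Equation~\ref{cubeautom} to conclude that in any rotation the number of transposition-type and sign-flip-type factors agree modulo~$2$, which forces $\sigma_r$ even for $r<k-2$ and $\mathrm{sign}(\sigma_{k-1})=\mathrm{sign}(\sigma_{k-2})$. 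Your approach bypasses this two-step decomposition by building the parity constraint directly into the generator (a $90^\circ$ rotation is already a transposition plus a sign flip), at the cost of a slightly more delicate orbit analysis---your ``$4$-cycles instead of transpositions'' observation in the mixed case is precisely where the two atomic effects fuse. Both arguments are short; yours is a bit more self-contained, while the paper's makes the role of Equation~\ref{cubeautom} more transparent.
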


\begin{proof}

Note that the first two conditions are a direct consequence of Proposition \ref{wrprop} since $\perm_n' = \alt_n$. In order to address the third and fourth condition, we will study the sign of the permutation $\sigma_r \in \perm_{k_r}$ of faces of rank $r < k$ of $\square^k$ induced by an automorphism $(\lambda, \pi) \in  C_2 \wr \perm_k \simeq \Gamma(\square^k)$ for a given $k$. Suppose first that $\lambda = 1$ and $\pi = (i \ j)$ is a transposition. Then $\sigma_r$ swaps pairs of faces $F$ such that $F(i) \not = F(j)$ and fixes the other ones. There are 
\begin{equation}\label{firstcomb}
2^{k-r}\left( \frac{1}{2} \binom{k-2}{r} + 2 \binom{k-2}{r-1}  \right)
\end{equation}
such faces for $r < k-1$ while there are $4$ for $r= k-1$. Therefore, $\sigma_r$ factorizes into half as many transpositions. Suppose now that $\pi = 1$ while $\lambda_i = 1$ for all $i$ except for some $j$. Then $\sigma_r$ swaps pairs of faces $F$ such that $F(j) \not = 0$ and fixes the other ones. There are 
\begin{equation}\label{secondcomb}
2^{k-r}\binom{k-1}{r} 
\end{equation}
such faces, and thus $\sigma_r$ factorizes into half as many transpositions. Note that half of both Equation \ref{firstcomb} and \ref{secondcomb} is even for $r < k-2$. Moreover, for $r = k-2$ half of Equation \ref{firstcomb} is odd while half of Equation \ref{secondcomb} is even, and the opposite is true for $r=k-1$. All this implies that if Equation \ref{cubeautom} holds for an arbitrary automorphism $(\lambda, \pi)$ then $\sigma_r \in \alt_{k_r}$ for $r < k-2$, while ${\rm sign}(\sigma_{k-1}) = {\rm sign}(\sigma_{k-2})$. The desired third and fourth condition follow by applying the latter fact to the moves from Definition \ref{moves} with $k=n-1$.
\end{proof}

The converse of the above result can be proven by induction on $n$, analogously to the case of the Rubik's simplex (Proposition \ref{simpmain}). The case of the Rubik's hypercube is extremely similar, with a few additional details. Even further, the embedding described in Section \ref{seccomparison} can be exploited for recycling the results from Section \ref{secfundsimp} and extend them to the hypercube. For the sake of simplicity, we briefly outline the main additional subtleties that arise. For a proof complete of all the details, we refer the reader to \cite{vladislav2020generalization}. To begin with, Proposition \ref{cubemain} is well-known for the Rubik's cube (i.e., $n=3$), which is analogous to Lemma \ref{basesimp}. Next, Lemma \ref{facetlemm} and Lemma \ref{bringcolors} can be extended to the Rubik's cube via the aforementioned embedding. The only difference is that, as an additional step, it is necessary to flip the sign at a given index of the involved locations, which is straightforward. From here, the analogue of Lemma \ref{solveperm} and Lemma \ref{solveorient} can be derived by induction with the same argument based on commutators. Lastly, we are left with discussing the ridges of the Rubik's hypercube. Due to the fourth condition in Proposition \ref{cubemain}, it is necessary to show that any transposition of ridges can be realized in $\rubg(\square^n)$. Similarly to Lemma \ref{sovleridge}, this can be achieved by bringing two given ridges to a common location and reducing to the case $n=3$.   

As a consequence of Proposition \ref{cubemain}, the number of possible configurations of the Rubik's hypercube is: 

\begin{equation}
|\rubg(\square^n)| =  \frac{1}{c_n 2^{2^n + 2(n-2)}} \prod_{0 \leq i < n-1}(n-i)!^{2^{n-i}\binom{n}{i}}\left(2^{n-i}\binom{n}{i} \right)!,
\end{equation}
where:
\begin{equation}
c_n = |\alt_n / \alt_n'| = \begin{cases}
3 & n = 3,4, \\
1 & n \geq 5. 
\end{cases}
\end{equation}

\section*{Acknowledgements}
This work was partially supported by the Wallenberg AI, Autonomous Systems and Software Program (WASP) funded by the Knut and Alice Wallenberg Foundation.

{
\bibliographystyle{plainnat}
\bibliography{main}
}


\end{document}